\DeclareFontFamily{U}{mathx}{\hyphenchar\font45}
\DeclareFontShape{U}{mathx}{m}{n}{
	<5> <6> <7> <8> <9> <10>
	<10.95> <12> <14.4> <17.28> <20.74> <24.88>
	mathx10
}{}
\DeclareSymbolFont{mathx}{U}{mathx}{m}{n}
\DeclareMathAccent{\widecheck}      {0}{mathx}{"71}
\renewcommand{\email}[1]{\emailname: #1} % change the email address font style
\renewenvironment{proof}{\noindent{\itshape Proof.}}{\smartqed\qed}
\newcommand{\bsk}{{\boldsymbol{k}}}
\newcommand{\bsn}{{\boldsymbol{n}}}
\newcommand{\bsx}{{\boldsymbol{x}}}
\newcommand{\bslambda}{{\boldsymbol{\lambda}}}
\newcommand{\bbN}{{\mathbb{N}}}
\newcommand{\bbZ}{{\mathbb{Z}}}
\DeclareSymbolFont{bbold}{U}{bbold}{m}{n}
\DeclareSymbolFontAlphabet{\mathbbold}{bbold}
\newcommand{\bbone}{{\mathbbold{1}}}
\newcommand{\calA}{{\mathcal{A}}}
\newcommand{\calB}{{\mathcal{B}}}
\newcommand{\calC}{{\mathcal{C}}}
\newcommand{\calF}{{\mathcal{F}}}
\newcommand{\calG}{{\mathcal{G}}}
\newcommand{\calH}{{\mathcal{H}}}
\newcommand{\calO}{{\mathcal{O}}}
\providecommand{\argmax}{\operatorname*{argmax}}
\DeclareSymbolFont{bbold}{U}{bbold}{m}{n}
\DeclareSymbolFontAlphabet{\mathbbold}{bbold}
\providecommand*{\toclevel@author}{999}
\providecommand*{\toclevel@title}{0}
\newcommand{\DHJRnorm}[2][{}]{\ensuremath{\left \lVert #2 \right \rVert}_{#1}}
\newcommand{\DHJRbignorm}[2][{}]{\ensuremath{\bigl \lVert #2 \bigr \rVert}_{#1}}
\newcommand{\DHJRBignorm}[2][{}]{\ensuremath{\Bigl \lVert #2 \Bigr \rVert}_{#1}}
\newcommand{\DHJRabs}[1]{\ensuremath{{\left \lvert #1 \right \rvert}}}
\definecolor{orange}{rgb}{1.0,0.3,0.0}
\definecolor{violet}{rgb}{0.75,0,1}
\begin{document}

\title*{An Adaptive Algorithm Employing Continuous Linear Functionals}
\author{Yuhan Ding \and Fred J. Hickernell \and Llu\'{\i}s Antoni Jim\'{e}nez Rugama}
\institute{Yuhan Ding \at MCA 310, Department of Mathematics, Misericordia University,\\ 301 Lake St., Dallas, PA, 18612 \email{yding@misericordia.edu}
\and
Fred J. Hickernell \at Center for Interdisciplinary Scientific Computation and \\
Department of Applied Mathematics, Illinois Institute of Technology \\ RE 208, 10 W.\ 32$^{\text{nd}}$ St., Chicago, IL 60616 \email{hickernell@iit.edu}
\and Llu\'{\i}s Antoni Jim\'{e}nez Rugama \at
Department of Applied Mathematics, Illinois Institute of Technology,\\ RE 208, 10 W.\ 32$^{\text{nd}}$ St., Chicago, IL 60616 \email{ljimene1@hawk.iit.edu}}

\maketitle

\abstract{Automatic algorithms attempt to provide approximate solutions that differ from exact solutions by no more than a user-specified error tolerance. This paper describes an automatic, adaptive algorithm for approximating the solution to a general linear problem on Hilbert spaces. The algorithm employs continuous linear functionals of the input function, specifically Fourier coefficients. We assume that the Fourier coefficients of the solution decay sufficiently fast, but do not require the decay rate to be known a priori. We also assume that the Fourier coefficients decay steadily, although not necessarily monotonically. Under these assumptions, our adaptive algorithm is shown to produce an approximate solution satisfying the desired error tolerance, without prior knowledge of the norm of the function to be approximated. Moreover, the computational cost of our algorithm is shown to be essentially no worse than that of the optimal algorithm. We provide a numerical experiment to illustrate our algorithm.}

\section{Introduction}
Adaptive algorithms determine the design and sample size needed to solve problems to the desired accuracy based on the input function data sampled.  A priori upper bounds on some norm of the input function are not needed, but some underlying assumptions about the input function are required for the adaptive algorithm to succeed.  Here we consider \emph{general linear problems} where a finite number of series coefficients of the input function are used to obtain an approximate solution.  The proposed algorithm produces an approximation with guaranteed accuracy.  Moreover, we demonstrate that the computational cost of our algorithm is essentially no worse than that of the best possible algorithm.  Our adaptive algorithm is defined on a \emph{cone} of input functions.

\subsection{Input and Output Spaces}  Let $\calF$ be a separable Hilbert space of inputs with orthonormal basis $\{u_i\}_{i \in \bbN}$, let $\calG$ be a separable Hilbert space of outputs  with orthonormal basis $\{v_i\}_{i\in \bbN}$, and let their norms be defined as the $\ell^2$-norms of their series coefficients:  
\begin{subequations}\label{probDef}
\begin{gather}
f = \sum_{i\in \bbN} \widehat{f}_i u_i \in \calF, \qquad 
\DHJRnorm[\calF]{f}=\DHJRbignorm[2]{(\widehat{f}_i\big)_{i\in \bbN}}, \\
g = \sum_{i\in \bbN} \widehat{g}_i v_i \in \calG, \qquad \DHJRnorm[\calG]{g}=\DHJRbignorm[2]{(\widehat{g}_i\big)_{i\in \bbN}}.
\end{gather}
Let these two bases be chosen so that the linear solution operator, $S:\calF \to \calG$, satisfies
\begin{gather}
S(u_i) = \lambda_i v_i, \quad i \in \bbN, \qquad S(f) = \sum_{i=1}^n \lambda_i \widehat{f}_i v_i, \\
\lambda_1 \ge \lambda_2 \ge \cdots > 0, \quad \lim_{i \to \infty} \lambda_i = 0, \qquad
\DHJRnorm[\calF \to \calG]{S} := \sup_{f \ne 0} \frac{\DHJRnorm[\calG]{S(f)}}{\DHJRnorm[\calF]{f}} = \lambda_1.
\end{gather}
\end{subequations}
This setting includes, for example, the recovery of functions, derivatives, indefinite integrals, and solutions of linear (partial) differential equations.  We focus on cases where the exact solution generally requires an infinite number of series coefficients, $\widehat{f}_i$.

The existence of the $( u_i )_{i \in \bbN}$, $ ( v_i )_{i \in \bbN}$, and $( \lambda_i )_{i \in \bbN}$ for a given $\calF$, $\calG$, and $S$ follows from the singular value decomposition.  The ease of identifying explicit expressions for these quantities depends on the particular problem of interest.  Alternatively, one may start with a choice of $( u_i )_{i \in \bbN}$, $ ( v_i )_{i \in \bbN}$, and $( \lambda_i )_{i \in \bbN}$, which then determine the solution operator, $S$, and the spaces $\calF$ and $\calG$.

\subsection{Solvability}  Let $\calH$ be any subset of $\calF$, and let $\calA(\calH)$ denote the set of deterministic algorithms that successfully approximate the solution operator $S : \calH \to \calG$ within some error tolerance for all inputs in $\calH$:
\begin{multline} \label{adapErrCrit}
\calA(\calH) : = \left\{ \text{algorithms } A:\calH \times (0,\infty) \rightarrow \calG : 
\right . \\ \left .
\DHJRbignorm[\calG]{S(f) - A(f,\varepsilon)} \le \varepsilon \ \forall f \in \calH, \ \varepsilon > 0 
\right \}.
\end{multline}
Algorithms in $\calA(\calH)$ are allowed to sample adaptively any bounded, linear functionals
of the input function.  They must sample only a finite number of linear functionals for each input function and positive tolerance.  The definition of $\calH$ can be used to construct algorithms in $\calA(\calH)$, but no other a priori knowledge about the input functions is available.  Following \cite{KunEtal19a} we call a problem \emph{solvable} for inputs $\calH$ if $\calA(\calH)$ is non-empty.

Our problem is not solvable for the whole Hilbert space $\calF$, as can be demonstrated by contradiction. For any potential algorithm, we show that there exists some $f \in \calF$, that looks like $0$ to the algorithm, but for which $S(f)$ is far from $S(0) = 0$.   Choose any $A \in \calA(\calF)$ and $\varepsilon > 0$, and let $L_1, \ldots, L_n$ be the linear functionals are used to compute $A(0,\varepsilon)$. Since the output space, $\calG$, is infinite dimensional and $n$ is finite, there exists some nonzero $f \in \calF$ satisfying that $L_1(f) = \cdots = L_n(f) = 0$ with non-zero $S(f)$. This means that $A(cf,\varepsilon) = A(0,\varepsilon)$ for any real $c$, and both of these have approximation error no greater than $\varepsilon$, i.e.,
\begin{align*}
    \varepsilon &\ge \frac 12 \left[ \DHJRnorm[\calG]{S(0) - A(0,\varepsilon)} + \DHJRnorm[\calG]{S(cf) - A(cf,\varepsilon)}\right] \\
    & =  \frac 12 \left[ \DHJRnorm[\calG]{0 - A(0,\varepsilon)} + \DHJRnorm[\calG]{S(cf) - A(0,\varepsilon)}\right] \\
    & \ge  \frac {\DHJRnorm[\calG]{cS(f)}}2 = \frac {\DHJRabs{c}\DHJRnorm[\calG]{S(f)}}2  \qquad \text{by the triangle inequality}.
\end{align*}
Since $S(f) \ne 0$, it is impossible for this inequality to hold for all real $c$.  The presumed $A$ does not exist, $\calA(\calF)$ is empty, and our problem is not solvable for $\calF$. However, it is solvable for well-chosen subsets of $\calF$, as will be shown in the sections below.

\subsection{Computational Cost of the Algorithm and Complexity of the Problem} The computational cost of an algorithm $A \in \calA(\calH)$ for $f \in \calH$ and error tolerance $\varepsilon$ is denoted $\textup{cost}(A,f,\varepsilon)$, and is defined as the number of linear functional values required to produce $A(f,\varepsilon)$.  By overloading the notation, we define the cost of algorithms for sets of inputs, $\calH$, as 
\begin{equation*}
\textup{cost}(A,\calH,\varepsilon) : = \sup \{\textup{cost}(A,f,\varepsilon) : f \in \calH \} \qquad \forall \varepsilon > 0.
\end{equation*}
For unbounded sets, $\calH$, this cost may be infinite.  Therefore, it is meaningful to define the cost of algorithms for input functions in $\calH \cap \calB_{\rho}$, where $\calB_{\rho} : = \{ f \in \calF : \DHJRnorm[\calF]{f} \le \rho \}$ is the ball of radius $\rho$:
\begin{equation*}
\textup{cost}(A,\calH,\varepsilon,\rho) : = \sup \{\textup{cost}(A,f,\varepsilon) : f \in \calH \cap \calB_{\rho} \} \qquad \forall \rho > 0, \ \varepsilon > 0.
\end{equation*}
Finally, we define the complexity of the problem as the computational cost of the best algorithm:
\begin{gather*}
\textup{comp}(\calA(\calH),\varepsilon) := \min_{A \in \calA(\calH)} \textup{cost}(A, \calH, \varepsilon), \\
\textup{comp}(\calA(\calH),\varepsilon,\rho) := \min_{A \in \calA(\calH)} \textup{cost}(A, \calH, \varepsilon, \rho).
\end{gather*}
Note that $\textup{comp}(\calA(\calH),\varepsilon,\rho) \ge \textup{comp}(\calA(\calH \cap \calB_{\rho}),\varepsilon)$.  In the former case, the algorithm is unaware  that the input function has norm no greater than $\rho$.  

An optimal algorithm for $\calB_{\rho}$ can be constructed in terms of interpolation with respect to the first $n$ series coefficients of the input, namely,
\begin{gather}  \label{optAdef}
A_n(f) := \sum_{i=1}^n \lambda_{i} \widehat{f}_{i} v_{i}, \\
\label{errOpt}
\DHJRnorm[\calG]{S(f) - A_n(f)} = \DHJRBignorm[2]{\left(\lambda_{i} \widehat{f}_{i} \right)_{i= n+1}^{\infty}} \le \lambda_{n+1} \DHJRnorm[\calF]{f}.
\end{gather}
Define the non-adaptive algorithm as
\begin{equation} \label{optBallalg}
\widehat{A}(f,\varepsilon) = A_{n^*}(f), \quad \text{where } n^* = \min\{ n : \lambda_{n+1} \le \varepsilon/\rho \}, \qquad \widehat{A} \in \calA(\calB_{\rho}).
\end{equation}
This algorithm is optimal among algorithms in $\calA(\calB_{\rho})$, i.e.,
\[
\textup{comp}(\calA(\calB_{\rho}),\varepsilon) = \textup{cost}(\widehat{A},\calB_{\rho},\varepsilon) =
\min\{ n : \lambda_{n+1} \le \varepsilon/\rho \}.
\]

To prove this, let $A^*$ be an arbitrary algorithm in $\calA(\calB_{\rho})$, and let $L_1, \ldots, L_N$ be the linear functionals chosen when evaluating this algorithm for the zero function with tolerance $\varepsilon$.  Thus, $A^*(0,\varepsilon)$ is some function of $(L_1(0) , \ldots, L_N(0)) = (0, \ldots, 0)$.  Let $f$ be a linear combination of $u_1, \ldots, u_{N+1}$ with norm $\rho$ satisfying  $L_1(f) = \cdots = L_N(f) = 0$, then $A^*(\pm f) = A^*(0)$, and
\begin{align*}
\varepsilon & \ge \max_{\pm} \DHJRnorm[\calG]{S(\pm f) - A^*(\pm f)} =  \max_{\pm} \DHJRnorm[\calG]{\pm S( f) - A^*(0)} \\
& \ge \frac 12 \left [ \DHJRnorm[\calG]{S( f) - A^*(0)} + \DHJRnorm[\calG]{- S( f) - A^*(0)}\right] \\
& \ge \DHJRnorm[\calG]{S(f)} 
= \DHJRBignorm[2]{\big(\lambda_i\widehat{f}_i\big)_{i=1}^{N+1}} \\
& \ge \lambda_{N+1} \DHJRBignorm[2]{\big(\widehat{f}_i\big)_{i=1}^{N+1}} = \lambda_{N+1} \DHJRnorm[\calF]{f} = \lambda_{N+1} \rho.
\end{align*}
Thus, $\lambda_{N+1} \le \varepsilon/\rho$, and 
\[
\textup{cost}(A^*,\calB_\rho,\varepsilon) \ge \textup{cost}(A^*,0,\varepsilon)  = N \ge \min\{ n : \lambda_{n+1} \le \varepsilon/\rho \} = \textup{cost}(\widehat{A},\calB_\rho,\varepsilon).
\]
Hence, algorithm $\widehat{A}$ defined in \eqref{optBallalg} is optimal for $\calA(\calB_{\rho})$.

\begin{example} Consider the case of function approximation for periodic functions defined over [0,1], and algorithm $\widehat{A}$ defined in \eqref{optBallalg}:
	\begin{align*}
	f &= \sum_{k \in \bbZ} \widehat{f}(k) \widehat{u}_{k}  = \sum_{i \in \bbN} \widehat{f}_i u_i, 
	& S(f) & = \sum_{k \in \bbZ} \widehat{f}(k) \widehat{\lambda}_k \widehat{v}_k = \sum_{i \in \bbN} \widehat{f}_i \lambda_i v_i, \\
	\widehat{v}_{k} (x) &:= \begin{cases} 1, & k = 0, \\
	\displaystyle \sqrt{2} \sin(2\pi k x), & k > 0, \\
	\displaystyle \sqrt{2} \cos(2\pi k x), & k < 0, \\
	\end{cases} 
	& v_i &= \begin{cases} \widehat{v}_{-i/2}, & i \text{ even},\\
	\widehat{v}_{(i-1)/2}, & i \text{ odd},
	\end{cases} \\
	\widehat{\lambda}_{k} &:= \begin{cases} 1, & k = 0, \\
	\displaystyle \frac{1}{\DHJRabs{k}^r}, & k \ne 0,
	\end{cases}
	& \lambda_i &= \widehat{\lambda}_{\lfloor i/2\rfloor} = \frac{1}{\max(1,\lfloor i/2\rfloor)^r}, \\
	\widehat{u}_{k} &:=  \widehat{\lambda}_k \widehat{v}_k,
	& u_i & = \lambda_i v_i = \begin{cases} \widehat{u}_{-i/2}, & i \text{ even},\\
	\widehat{u}_{(i-1)/2}, & i \text{ odd},
	\end{cases}
	\\
	&&\widehat{f}_i& = \begin{cases} \widehat{f}(-i/2), & i \text{ even},\\
	\widehat{f}((i-1)/2), & i \text{ odd},
	\end{cases}
	\end{align*}
	\begin{align*}
	\textup{comp}(\calA(\calB_{\rho}),\varepsilon) &= \textup{cost}(\widehat{A},\calB_{\rho},\varepsilon) 
	=\min\{ n : \lambda_{n+1} \le \varepsilon/\rho \} \\
	& = \min \left\{ n : \frac{1}{\lfloor (n+1)/2 \rfloor^r} \le \frac{\varepsilon}{\rho} \right\} 
	= 2 \left \lceil \left(\frac{\rho}{\varepsilon} \right)^{1/r} \right \rceil - 1.
	% & \lambda_{n+1} = \frac{1}{\lceil (n+1)/2 \rceil^r}.
	\end{align*}
	Here, $\calG=L^2[0,1]$.  The larger the non-negative parameter $r$ is, the faster the $\lambda_i$ tend to 0 as $ i \to 0$, the more exclusive $\calB_\rho$ is, and the smaller  $\textup{cost}(\widehat{A},\calB_{\rho},\varepsilon)$ is.  For $r = 0$, $\textup{cost}(\widehat{A},\calB_{\rho},\varepsilon) = \infty$.
\end{example}

Our goal is to construct algorithms in $\calA(\calH)$ for some $\calH$ and also to determine whether their computational cost is reasonable.  We define  $\textup{cost}(A,\calH,\varepsilon,\rho)$ to be \emph{essentially no worse} than $\textup{cost}(A^*,\calH^*,\varepsilon,\rho)$ if for some number $\omega$,
\begin{equation} \label{essNoWorseDef}
    \textup{cost}(A,\calH,\varepsilon,\rho)
    \le 
    \textup{cost}(A^*,\calH^*,\omega\varepsilon,\rho)
    \qquad \forall \varepsilon,\rho > 0.
\end{equation}
We extend this definition analogously if $\textup{cost}(A,\calH,\varepsilon,\rho)$ is replaced by $\textup{cost}(A,\calH,\varepsilon)$ and/or $\textup{cost}(A^*,\calH^*,\omega\varepsilon,\rho)$ is replaced by $\textup{cost}(A^*,\calH^*,\omega\varepsilon)$.
If these inequalities are not satisfied, we say that the cost of  $A$ is \emph{essentially worse}  than the cost of $A^*$.
If the costs of two algorithms are essentially no worse than each other, then we call them essentially the same.  An algorithm whose cost is essentially no worse than the best possible algorithm, is called \emph{essentially optimal}.

Our condition for essentially no worse cost in \eqref{essNoWorseDef} is not the same as 
\begin{equation} \label{NOTessNoWorseDef}
    \textup{cost}(A,\calH,\varepsilon,\rho)
    \le 
    \omega \, \textup{cost}(A^*,\calH^*,\varepsilon,\rho)
    \qquad \forall \varepsilon,\rho > 0.
\end{equation}
If the $\textup{cost}(A^*,\calH^*,\varepsilon,\rho)$ is proportional to $\max(1,\varepsilon^{-p})$, then conditions \eqref{essNoWorseDef} and \eqref{NOTessNoWorseDef} are equivalent.  If $\textup{cost}(A^*,\calH^*,\varepsilon,\rho)$ is proportional to $[1+\log(\max(1,\varepsilon)]^{-p}$, then \eqref{essNoWorseDef} is stricter than \eqref{NOTessNoWorseDef}.

To illustrate the comparison of costs, consider a non-increasing sequence of positive numbers, $ \{\lambda^*_1, \lambda^*_2, \ldots \}$, which converges to $0$, where $\lambda^*_i \ge \lambda_i$ for all $i \in \bbN$.  Also consider an unbounded strictly increasing sequence of non-negative integers $\bsn = \{n_0, n_1, \ldots \}$.  Define an algorithm $A^*$ analogously to $\widehat{A}$ defined in \eqref{optBallalg}:
\begin{equation*} \label{altBallalg}
A^*(f,\varepsilon) = A_{n_{j^\dagger}}(f), \quad \text{where } j^\dagger = \min\{ j : \lambda^*_{n_j+1} \le \varepsilon/\rho \}, \qquad A^* \in \calA(\calB_{\rho}).
\end{equation*}
By definition, the cost of algorithm $A^*$ is no better than that of  $\widehat{A}$.  Algorithm $A^*$ may or may not have essentially worse cost than $\widehat{A}$ depending on the choice of $\bslambda^*$ and $\bsn$.  The table below shows some examples.  Each different case of $A^*$ is labeled as having a cost that is either essentially no worse or essentially worse than that of $\widehat{A}$.
%\[
\everymath{\displaystyle}
\begin{longtable}{>{$}r<{$}@{\quad}>{$}l<{$}@{\qquad}>{$}l<{$}>{$}l<{$}>{$}l<{$}}
\toprule \toprule
&\lambda_i = \frac{C}{i^p}
& 
\textup{cost}(\widehat{A},\calB_\rho,\varepsilon) \ge \left ( \frac{C\rho}{\varepsilon} \right)^{1/p} - 1
\\[2ex]
& &
\textup{cost}(\widehat{A},\calB_\rho,\varepsilon) 
<  \left ( \frac{C\rho}{\varepsilon} \right)^{1/p}
\\[2ex]
\midrule
\text{no worse}
&
\lambda^*_i = \frac{C^*}{i^p}, \ n_j = 2^j
&
\textup{cost}(A^*,\calB_\rho,\varepsilon) \le 
2 \left ( \frac{C^*\rho}{\varepsilon} \right)^{1/p}
\\[2ex]
\midrule
\text{worse}
&
\lambda^*_i = \frac{C^*}{i^q}, \ q<p, \ n_j = j
&
\textup{cost}(A^*,\calB_\rho,\varepsilon)  \ge 
\left ( \frac{C^*\rho}{\varepsilon} \right)^{1/q} - 1 
\\[2ex]
\toprule \toprule
&\lambda_i = \frac{C}{p^i}, \ p > 1
& 
\textup{cost}(\widehat{A},\calB_\rho,\varepsilon) \ge \frac{\log (C\rho/\varepsilon)}{\log(p)} - 1
\\[2ex]
&&
\textup{cost}(\widehat{A},\calB_\rho,\varepsilon) < \frac{\log (C\rho/\varepsilon)}{\log(p)}
\\[2ex]
\midrule
\text{no worse}
&
\lambda^*_i = \frac{C^*}{p^i}, \ n_j = 2j
&
\textup{cost}(A^*,\calB_\rho,\varepsilon) < \frac{\log (C^*\rho/\varepsilon)}{\log(p)} + 1
\\[2ex]
\midrule
\text{worse}
&
\lambda^*_i = \frac{C^*}{p^i}, \ n_j = 2^j
&
\textup{cost}(A^*,\calB_\rho,\varepsilon) > 1.999 \frac{\log (C^*\rho/\varepsilon)}{\log(p)} 
\\[1ex]
&& \qquad \qquad \text{for some } \varepsilon
\\
\midrule
\text{worse}
&
\lambda^*_i = \frac{C^*}{i^q}, \ q<p, \ n_j = j
&
\textup{cost}(A^*,\calB_\rho,\varepsilon)  \ge 
\frac{\log (C^*\rho/\varepsilon)}{\log(q)} - 1
\\[2ex]
\toprule \toprule
\end{longtable}
%\]

\subsection{The Case for Adaptive Algorithms}
For bounded sets of input functions, such as balls, non-adaptive algorithms like $\widehat{A}$ make sense.  However, an a priori upper bound on $\DHJRnorm[\calF]{f}$ is typically unavailable in practice, so it is unknown which $\calB_{\rho}$ contain the input function $f$.  Our aim is to consider unbounded sets of $f$ for which the error of the interpolatory algorithm $A_n(f)$, defined in \eqref{optAdef}, can be bounded without an a priori upper bound on $\DHJRnorm[\calF]{f}$.

Adaptive algorithms encountered in practice typically employ heuristic error bounds.  While any algorithm can be fooled, we would like precise necessary conditions for being fooled, or equivalently, sufficient conditions for the algorithm to succeed.  Our adaptive algorithm has such conditions and follows in the vein of adaptive algorithms developed in \cite{HicEtal14a, HIcEtal14b, HicJim16a, JimHic16a}.

Our rigorous, data-driven error bound assumes the series coefficients of the input function, $f$, to decay steadily---but not necessarily monotonically.  The cone of nice input functions, $\calC$, is defined in Section \ref{sec:cone}.  For such inputs, we construct an adaptive algorithm, $\widetilde{A} \in \calA(\calC)$, in Section \ref{sec:adaptalgo},  where $\widetilde{A}(f,\varepsilon) = A_{n^*}(f)$ for some $n^*$ depending on the input data and the definition of $\calC$.  The number of series coefficients sampled, $n^*$, is adaptively determined so that $\widetilde{A}(f,\varepsilon)$ satisfies the error condition in \eqref{adapErrCrit}.  The computational cost of $\widetilde{A}$ is given in  Theorem \ref{thm:compcost}.  Section \ref{sec:opt} shows that our new algorithm is essentially optimal (see Theorem \ref{thm:CostNoWorse}).  Section \ref{sec:examp} provides an example of our algorithm.  We end with concluding remarks in Section \ref{sec:conc}.

%%%%%%%%%%%%%%%%%%%%%%%%%%%%%%%%%%%%%%%%%%%%%%%%%%%%%%%%%%%%%%%%%%%%%%%%%
\section{Assuming a Steady Decay of the Series Coefficients of the Solution} \label{sec:cone}

Recall from \eqref{errOpt} that the error of the fixed sample size interpolatory algorithm $A_n$ is $\DHJRnorm[\calG]{S(f) - A_n(f)} = \DHJRbignorm[2]{\bigl(\lambda_{i} \widehat{f}_{i} \bigr)_{i= n+1}^{\infty}}$.  The error depends on the series coefficients not yet observed, so at first it seems impossible to bound the error in terms of observed series coefficients.  

However, we can observe the partial sums 
\begin{equation} \label{sumdef}
\sigma_j(f) :
= \DHJRnorm[2]{ \left(\lambda_{i} \widehat{f}_{i} \right)_{i=n_{j-1}+1}^{n_j}}, \qquad j \in \bbN,
\end{equation}
where $\bsn  = \{n_0, n_1, \ldots\}$ is a strictly increasing, unbounded sequence of non-negative integers.  We define the cone of nice input functions to consist of those functions for which the $\sigma_j(f)$ decay at a given rate with respect to one another:
\begin{align} \label{decayconedef}
\calC &= \left\{ f \in \calF : \sigma_{j+r}(f) \le ab^r \sigma_j (f) \ \ \forall j,r \in \bbN \right\} \\
\nonumber
& = \left\{ f \in \calF : \sigma_j(f) \le \min_{1 \le r < j}\{ab^r\sigma_{j-r}(f)\} \ \ \forall j \in \bbN \right\}.
\end{align}
Here, $a$ and $b$ are positive numbers that define the inclusivity of the cone $\calC$ and satisfy
\begin{equation*} \label{abcond}
b <1 < a.
\end{equation*}
The constant $a$ is an inflation factor, and the constant $b$ defines the general rate of decay of the $\sigma_j(f)$ for $f \in \calC$. Because $ab^r$ may be greater than one, we do not require the series coefficients of the solution, $S(f)$, to decay monotonically. However, we expect their partial sums to decay steadily.

From the expression for the error in \eqref{errOpt} and the definition of the cone in  \eqref{decayconedef}, one can now derive a data-driven error bound for $j \in \bbN$:
\begin{align}
\nonumber
\DHJRnorm[\calG]{S(f)-A_{n_j}(f)} &= \DHJRnorm[2]{\left(\lambda_{i} \widehat{f}_{i} \right)_{i = n_j+1}^\infty} = \left\{\sum_{r=1}^\infty \sum_{i=n_{j+r-1}+1}^{n_{j+r}}  \DHJRabs{\lambda_{i}\widehat{f}_{i} }^{2}  \right\}^{1/2}\\
\nonumber
&= \DHJRnorm[2]{ \bigl(\sigma_{j+r}(f)\bigr)_{r=1}^{\infty}} \\
&\le \DHJRnorm[2]{ \bigl(ab^r\sigma_{j}(f)\bigr)_{r=1}^{\infty}}
 = \displaystyle ab \sqrt{\frac{1}{1 - b^2}}\sigma_{j}(f)
 \label{algoineq}
\end{align}
This upper bound depends only on the function data and the parameters defining $\calC$.  The error vanishes as $j \to \infty$ because $\sigma_j(f) \le ab^{j-1} \sigma_1(f) \to 0$ as $j \to \infty$.  Moreover, the error of $A_{n_j}(f)$ is asymptotically no worse than $\sigma_j(f)$, whose rate of decay need not be postulated in advance. Our adaptive algorithm in Section \ref{sec:adaptalgo} increases $j$ until the right hand side is smaller than the error tolerance.

Consider the choice 
\begin{equation*} \label{geonj}
n_j = 2^{j}n_0,
\end{equation*}
where the number of terms in the sums, $\sigma_j(f)$, are doubled at each step.  If the series coefficients of the solution decay like $\lambda_{i} \DHJRabs{f_{i}} = \calO(i^{-p})$ for some $p>1$, then it is reasonable to expect that the $\sigma_j(f)$ are bounded above and below as
\begin{equation} \label{algDecJ}
C_{\textup{lo}} (n_02^j)^{1-p} \le \sigma_j(f) \le C_{\textup{up}} (n_02^j)^{1-p}, \quad   j \in \bbN,
\end{equation}
for some constants $C_{\textup{lo}}$ and $C_{\textup{up}}$, unless the series coefficients drop precipitously in magnitude for some $n_{j-1} < i \le n_j$, and then jump back up for larger $i$.  When \eqref{algDecJ} holds, it follows that
\begin{equation*} 
\frac{\sigma_{j+r}(f)}{\sigma_j(f)} \le \frac{C_{\textup{up}} (n_02^{j+r})^{1-p}}{C_{\textup{lo}} (n_02^j)^{1-p}} = \frac{C_{\textup{up}} 2^{r(1-p)}}{C_{\textup{lo}}}\quad   j \in \bbN.
\end{equation*}
Thus, choosing $a \ge C_{\textup{up}}/C_{\textup{lo}}$ and $b \ge 2^{1-p}$ ensures that reasonable inputs $f$ lie inside the cone $\calC$.

%%%%%%%%%%%%%%%%%%%%%%%%%%%%%%%%%%%%%%%%%%%%%%%%%%%%%%%%%%%%%%%%%%%%%%%%%%%%%%%%%%
\section{Adaptive Algorithm} \label{sec:adaptalgo}

Now we introduce our adaptive algorithm, $\widetilde{A} \in \calA(\calC)$, which yields an approximate solution to the problem $S:\calC\rightarrow\calG$ that meets the absolute error tolerance $\varepsilon$.

\begin{algo}\label{algo2}
Given $a$, $b$, the sequence $\bsn$, the cone $\calC$, the input function $f \in \calC$, and the absolute error tolerance $\varepsilon$, set $j=1$.
\begin{description}
\item[Step 1.] Compute $\sigma_{j}(f)$ as defined in \eqref{sumdef}.
\item[Step 2.] Check whether $j$ is large enough to satisfy the error tolerance, i.e.,
    \begin{equation*}\label{covcrit}
          \sigma_{j}(f) \le \frac{\varepsilon\sqrt{1 - b^2}}{ab} .
    \end{equation*}
    If this is true, then return $\widetilde{A}(f,\varepsilon) = A_{n_{j}}(f)$, where $A_n$ is defined in \eqref{optAdef}, and terminate the algorithm.
\item[Step 3.] Otherwise, increase $j$ by $1$ and return to Step $1$.
\end{description}
\end{algo}

\begin{theorem}\label{thm:compcost}
The algorithm, $\widetilde{A}$, defined in Algorithm \ref{algo2} lies in $\calA(\calC)$ and has computational cost $\textup{cost}(\widetilde{A},f,\varepsilon)=n_{j^*}$, where $j^*$ is defined implicitly by the inequalities 
\begin{equation} \label{eq:OurAlgjstar}
j^* = \min\left \{ j \in \bbN : \sigma_{j}(f) \le \frac{\varepsilon\sqrt{1 - b^2}}{ab}  \right\}.
\end{equation}
Moreover, $\textup{cost}(\widetilde{A},\calC,\varepsilon,\rho) \le n_{j^\dagger}$, where $j^\dagger$ satisfies the following upper bound:
\begin{equation} \label{jdagger}
j^\dagger \le \min \left \{j \in \bbN : \frac{\rho^2}{\varepsilon^2} \le \frac{(1 - b^2)}{a^2b^2} \left[ \sum_{k=1}^{j-1} \frac{b^{2(k-j)}}{a^2\lambda_{n_{k-1}+1}^2} + \frac{1}{\lambda_{n_{j-1}+1}^2}\right]   \right\}.
\end{equation}

\end{theorem}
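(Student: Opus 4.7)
The plan is to address the two conclusions of the theorem in sequence. For the first claim, fix $f \in \calC$ and iterate the cone inequality in \eqref{decayconedef} with $j=1, r = j-1$ to obtain $\sigma_j(f) \le ab^{j-1}\sigma_1(f)$ for all $j \ge 1$; since $b<1$, the right hand side tends to $0$, so the stopping criterion in Step 2 is eventually satisfied and $j^*$ defined by \eqref{eq:OurAlgjstar} is well-defined and finite. At termination, combining the stopping criterion with the data-driven error bound \eqref{algoineq} gives
\begin{equation*}
\DHJRnorm[\calG]{S(f) - A_{n_{j^*}}(f)} \le \frac{ab}{\sqrt{1-b^2}}\, \sigma_{j^*}(f) \le \frac{ab}{\sqrt{1-b^2}} \cdot \frac{\varepsilon\sqrt{1-b^2}}{ab} = \varepsilon,
\end{equation*}
so $\widetilde{A}\in\calA(\calC)$. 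Because the returned approximation $A_{n_{j^*}}(f)$ depends only on $\widehat{f}_1,\dots,\widehat{f}_{n_{j^*}}$, the per-input cost equals $n_{j^*}$.

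For the second claim, the strategy is to show by contradiction that if $j$ satisfies the inequality displayed in \eqref{jdagger}, then $j^*(f)\le j$ for every $f\in\calC\cap\calB_\rho$. Suppose instead $j^*(f) > j$ for some such $f$; by definition of $j^*$ this forces $\sigma_j(f) > \varepsilon\sqrt{1-b^2}/(ab)$. Applying the cone condition with the roles $j\leftarrow k$ and $r \leftarrow j-k$ for each $k< j$ yields $\sigma_j(f)\le ab^{j-k}\sigma_k(f)$, i.e.\ $\sigma_k(f)\ge \sigma_j(f)\, b^{k-j}/a$. Using the monotonicity $\lambda_i\le \lambda_{n_{k-1}+1}$ for $i>n_{k-1}$ gives the blockwise Parseval-type estimate $\sum_{i=n_{k-1}+1}^{n_k}|\widehat{f}_i|^2 \ge \sigma_k(f)^2/\lambda_{n_{k-1}+1}^2$. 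Summing these estimates for $k=1,\dots,j-1$ (where the cone bound is used) and $k=j$ (where the trivial bound $\sigma_j(f)\ge\sigma_j(f)$ is used), and combining with $\DHJRnorm[\calF]{f}^2\le\rho^2$, produces
\begin{equation*}
\rho^2 \;\ge\; \sigma_j(f)^2 \left[\sum_{k=1}^{j-1} \frac{b^{2(k-j)}}{a^2\lambda_{n_{k-1}+1}^2} + \frac{1}{\lambda_{n_{j-1}+1}^2}\right].
\end{equation*}
Substituting $\sigma_j(f)^2 > \varepsilon^2(1-b^2)/(a^2b^2)$ yields a strict inequality which directly contradicts the hypothesis \eqref{jdagger} on $j$. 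Therefore $j^*(f)\le j$ for every $f\in\calC\cap\calB_\rho$, and taking the smallest admissible $j$, namely $j^\dagger$, gives $\textup{cost}(\widetilde{A},\calC,\varepsilon,\rho)\le n_{j^\dagger}$.

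The main obstacle is the bookkeeping that produces the asymmetric bracket in \eqref{jdagger}: the lower bound $\sigma_k(f)\ge \sigma_j(f) b^{k-j}/a$ carries an extra factor $1/a$ that comes from the inflation constant of the cone condition, but this factor is absent at $k=j$, where the estimate is trivial. This asymmetry is precisely responsible for the $1/a^2$ appearing in the sum over $k=1,\dots,j-1$ but not in the stand-alone term $1/\lambda_{n_{j-1}+1}^2$. Once this is correctly isolated, the remaining steps are routine applications of the cone condition, the Parseval identity on $\calF$, and the monotonicity of the singular values $\lambda_i$.
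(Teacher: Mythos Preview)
Your proposal is correct and follows essentially the same approach as the paper's proof: both establish termination via $\sigma_j(f)\le ab^{j-1}\sigma_1(f)\to 0$, invoke \eqref{algoineq} for the error guarantee, and for the cost bound both combine the blockwise inequality $\sum_{i=n_{k-1}+1}^{n_k}|\widehat f_i|^2\ge \sigma_k(f)^2/\lambda_{n_{k-1}+1}^2$ with the cone-derived lower bound $\sigma_k(f)\ge \sigma_j(f)b^{k-j}/a$ for $k<j$, then plug in $\sigma_j(f)>\varepsilon\sqrt{1-b^2}/(ab)$. The only cosmetic differences are that the paper first records a coarser single-term bound before passing to \eqref{jdagger}, and phrases the final step as a direct implication rather than your contradiction framing; your identification of why the $k=j$ term lacks the $1/a^2$ factor is exactly the point.
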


\begin{proof}
This algorithm terminates for some $j = j^*$ because $\sigma_j(f) \le ab^{j-1} \sigma_{1}(f) \to 0$ as $j \to \infty$. The value of $j^*$ follows directly from this termination criterion in Step 2.  It then follows that the error bound on $A_{n_{j^*}}(f)$ in \eqref{algoineq} is no greater than the error tolerance $\varepsilon$.  So, $\widetilde{A} \in \calA(\calC)$.

For the remainder of the proof consider $\rho$ and $\varepsilon$ to be fixed.  To derive an upper bound on $n_{j^\dagger} = \textup{cost}(\widetilde{A},\calC,\varepsilon,\rho)$ we first note some properties of $\sigma_j(f)$ for all $f \in \calC$:
\begin{multline} \label{normsigineq}
\lambda_{n_j} \DHJRbignorm[2]{\big( \widehat{f}_i \big)_{i=n_{j-1}+1}^{n_j}} \le 
\DHJRbignorm[2]{\big( \lambda_i \widehat{f}_i \big)_{i=n_{j-1}+1}^{n_j}} = \sigma_j(f) 
\\
\le \lambda_{n_{j-1}+1} \DHJRbignorm[2]{\big( \widehat{f}_i \big)_{i=n_{j-1}+1}^{n_j}}.
\end{multline}

A rough upper bound on $j^\dagger$ may be obtained by noting that for any $f \in \calC \cap \calB_\rho$ and for any $j < j^* \le j^\dagger$, it follows from \eqref{eq:OurAlgjstar} and \eqref{normsigineq} that 
\begin{equation*}
\rho \ge \DHJRnorm[\calF]{f} \ge \DHJRbignorm[2]{\big( \widehat{f}_i \big)_{i=n_{j-1}+1}^{n_j}} \ge \frac{\sigma_j(f)}{\lambda_{n_{j-1}+1}} > \frac{\varepsilon\sqrt{1 - b^2}}{ab \lambda_{n_{j-1}+1}}
\end{equation*}
Thus, one upper bound on $j^\dagger$ is the smallest $j$ violating the above inequality:
\begin{equation} \label{jdaggerlast}
j^\dagger \le \min \left \{j \in \bbN :  \lambda_{n_{j-1}+1} \le \frac{\varepsilon\sqrt{1 - b^2}}{ab \rho} \right\}.
\end{equation}

The tighter upper bound in Theorem \ref{thm:compcost} may be obtained by a more careful argument in a similar vein.  
For any $f \in  \calC \cap \calB_\rho$ and for any $j < j^* \le j^\dagger$,
\begin{align*}
\rho^2 &\ge \DHJRnorm[\calF]{f}^2 = \DHJRbignorm[2]{\big( \widehat{f}_i \big)_{i=1}^{\infty}}^2 \\
& \ge \sum_{k=1}^j \DHJRbignorm[2]{\big( \widehat{f}_i \big)_{i=n_{k-1}+1}^{n_k}}^2 \qquad \forall j \ge 1\\
& \ge \sum_{k=1}^j \frac{\sigma_k^2(f)}{\lambda_{n_{k-1}+1}^2} \qquad \text{by \eqref{normsigineq}}\\
& \ge \sum_{k=1}^{j-1} \frac{ b^{2(k-j)}\sigma_j^2(f)}{a^{2}\lambda_{n_{k-1}+1}^2} + \frac{\sigma_j^2(f)}{\lambda_{n_{j-1}+1}^2} \qquad \text{by \eqref{decayconedef}} \\
& = \sigma_j^2(f) \left[ \sum_{k=1}^{j-1} \frac{b^{2(k-j)}}{a^{2} \lambda_{n_{k-1}+1}^2} + \frac{1}{\lambda_{n_{j-1}+1}^2}\right].
\end{align*}
Note that the quantity in the square brackets is an increasing function of $j$ because as $j$ increases, the sum includes more terms and $b^{2(k-j)}$ also increases.

For all $j < j^* \le j^\dagger$ it follows from \eqref{eq:OurAlgjstar} that
\begin{equation*}
\rho^2 > \frac{\varepsilon^2(1 - b^2)}{a^2b^2} \left[ \sum_{k=1}^{j-1} \frac{ b^{2(k-j)}}{a^{2}\lambda_{n_{k-1}+1}^2} + \frac{1}{\lambda_{n_{j-1}+1}^2}\right].
\end{equation*}
Thus, any $j$ that violates the above inequality, must satisfy $j \ge j^\dagger$, establishing \eqref{jdagger}.
\end{proof}

We note in passing that for our adaptive algorithm
\begin{equation*}
 \min \{\textup{cost}(\widetilde{A},f,\varepsilon) : f \in \calC, \ \DHJRnorm[\calF]{f} \ge \rho \} 
 \begin{cases} = n_1, & n_0 > 0, \\
 \le n_2, & n_0 = 0, 
 \end{cases}
 \qquad \forall \rho > 0, \ \varepsilon > 0.
\end{equation*}
This result may be obtained by considering functions where only $\widehat{f}_1$ is nonzero.  For $n_0 > 0$, $\sigma_1(f) = 0$, and for $n_0 = 0$, $\sigma_2(f) = 0$.

The upper bound on $\textup{cost}(\widetilde{A},\calC,\rho,\varepsilon)$ in Theorem \ref{thm:compcost}  is a non-decreasing function of $\rho/\varepsilon$, which depends on the behavior of the sequence $\{(\lambda_{n_j})_{j=0}^\infty\}$.  This in turn depends both on the increasing sequence $\bsn$ and on the non-increasing sequence $\{(\lambda_i)_{i=1}^\infty\}$. Consider the  term enclosed in square brackets on the the right hand side of the inequality in \eqref{jdagger}: \begin{equation*} \label{keysum}
\sum_{k=1}^{j-1} \frac{b^{2(k-j)}}{a^2\lambda_{n_{k-1}+1}^2} + \frac{1}{\lambda_{n_{j-1}+1}^2}.
\end{equation*}
One can imagine that in some cases the first term in the sum dominates, while in other cases the term outside the sum dominates, all depending on how $b^{k-j}/\lambda_{n_{k-1}+1}$ behaves with $k$ and $j$.  These simplifications lead to two simpler, but coarser upper bounds on the cost of $\widetilde{A}$.

\begin{corollary} For the algorithm, $\widetilde{A}$, defined in Algorithm \ref{algo2}, then $\textup{cost}(\widetilde{A},\calC,\varepsilon,\rho) \le n_{j^\dagger}$, where $j^\dagger$ satisfies the following upper bound:
\begin{equation} \label{jdaggerfirst}
j^\dagger \le \left \lceil \log\left(\frac{\rho a^2\lambda_{n_{0}+1} }{\varepsilon \sqrt{1 - b^2}}\right) \div \log\left(\frac{1}{b}\right) \right \rceil.
\end{equation}
Moreover, if the $\lambda_{n_{j-1}+1}$ decay as quickly as
\begin{equation}
\label{lambdankbd}
\lambda_{n_{j-1}+1} \le \alpha \beta^j,  \quad j \in \bbN,  \qquad \text{for some } \alpha > 0, \ 0 < \beta < 1.
\end{equation}
then $j^\dagger$ also satisfies the following upper bound:
\begin{equation}
\label{jdaggerlastsimple}
j^\dagger \le
\left \lceil \log\left(\frac{\rho a\alpha b }{\varepsilon \sqrt{1 - b^2}}\right) \div \log\left(\frac{1}{\beta}\right) \right \rceil.
\end{equation}
\end{corollary}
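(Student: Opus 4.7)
The plan is to derive both inequalities directly from the implicit bound \eqref{jdagger} of Theorem~\ref{thm:compcost} by lower bounding the bracketed quantity on its right-hand side with a single, simply expressed term.  The underlying principle is that for any lower bound $L(j)$ on $[\,\cdots\,]$ in \eqref{jdagger}, the sufficient condition $\rho^2/\varepsilon^2 \le L(j)$ implies the actual condition in \eqref{jdagger}, so the smallest $j$ satisfying the simpler inequality is itself an upper bound on $j^\dagger$.  Because every term of $\sum_{k=1}^{j-1} b^{2(k-j)}/(a^2\lambda_{n_{k-1}+1}^2) + 1/\lambda_{n_{j-1}+1}^2$ is positive, retaining only one term is a valid lower bound; the two parts of the corollary differ only in which term is retained.

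For \eqref{jdaggerfirst}, I would keep the $k=1$ summand, which equals $b^{2(1-j)}/(a^2\lambda_{n_0+1}^2)$ and requires no new assumption on the decay of the singular values. Substituting this single term into \eqref{jdagger} and simplifying gives the explicit sufficient condition $\rho^2/\varepsilon^2 \le (1-b^2)b^{-2j}/(a^4\lambda_{n_0+1}^2)$. Taking square roots and then logarithms with base $1/b$, which is legitimate since $b \in (0,1)$ makes $\log(1/b)>0$, and rounding up to a natural number, produces the claimed bound exactly.  For \eqref{jdaggerlastsimple}, I would instead keep the standalone term $1/\lambda_{n_{j-1}+1}^2$ sitting outside the sum; the hypothesis \eqref{lambdankbd} lets this be bounded below by $1/(\alpha^2\beta^{2j})$.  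The resulting sufficient condition $\rho^2/\varepsilon^2 \le (1-b^2)/(a^2 b^2 \alpha^2 \beta^{2j})$ is handled by the same square-root--logarithm--ceiling procedure, this time with base $1/\beta$, yielding \eqref{jdaggerlastsimple}.

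The argument poses no serious obstacle.  The only points that need care are the bookkeeping of inequality directions (one lower bounds the bracket in order to upper bound $j^\dagger$) and the verification that the logarithms are positive, which follows from $0 < b < 1$ and $0 < \beta < 1$.  The ceiling absorbs the integrality constraint $j \in \bbN$.  No tools beyond Theorem~\ref{thm:compcost} and the stated hypothesis \eqref{lambdankbd} enter the proof; the remainder is routine algebraic manipulation.
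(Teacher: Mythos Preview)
Your proposal is correct and follows essentially the same route as the paper's proof: for \eqref{jdaggerfirst} you retain the $k=1$ summand in \eqref{jdagger}, and for \eqref{jdaggerlastsimple} you retain the term outside the sum and invoke \eqref{lambdankbd}, exactly as the paper does. The remaining algebra and the handling of the ceiling match the paper's argument.
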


\begin{proof}
Ignoring all but the first term in the sum in \eqref{keysum} implies that 
\begin{equation*} \label{jsumupperbd}
j^\dagger \le \min \left \{j \in \bbN : \frac{\rho^2}{\varepsilon^2} \le \frac{(1 - b^2)}{a^2b^2} \frac{b^{2(1-j)}}{a^2\lambda_{n_{0}+1}^2}    \right\}.
\end{equation*}
This implies \eqref{jdaggerfirst}.

Ignoring all but the term outside the sum leads to the simpler upper bound in \eqref{jdaggerlast}.  If the $\lambda_{n_{j-1}+1}$ decay as assumed in \eqref{lambdankbd} then
\[
j^\dagger \le \min \left \{j \in \bbN :  \alpha \beta^j \le \frac{\varepsilon\sqrt{1 - b^2}}{ab \rho} \right\},
\]
which implies \eqref{jdaggerlastsimple}.
\end{proof}

This corollary highlights two limiting factors on the computational cost of our adaptive algorithm, $\widetilde{A}$. When $j$ is large enough to make $\lambda_{n_{j-1}+1}\DHJRnorm[\calF]{f}/\varepsilon$ small enough, $\widetilde{A}(f,\varepsilon)$ stops.  This is statement \eqref{jdaggerlastsimple}, and its precursor, \eqref{jdaggerlast}.  Alternatively, the assumption that the $\sigma_j(f)$ are steadily decreasing, as specified in the definition of $\calC$ in \eqref{decayconedef}, means that $\widetilde{A}(f,\varepsilon)$ also must stop by the time $j$ becomes large enough with respect to $\lambda_{n_0+1}\DHJRnorm[\calF]{f}/\varepsilon$.

Assumption \eqref{lambdankbd} is not very restrictive.  It holds if the $\lambda_i$ decay algebraically and the $n_j$ increase geometrically.  It also holds if the $\lambda_i$ decay geometrically and the $n_j$ increase arithmetically.

The adaptive algorithm $\widetilde{A}$, which does not know an upper bound on $\DHJRnorm[\calF]{f}$ a priori, may cost more than the non-adaptive algorithm $\widehat{A}$, which assumes an upper bound on $\DHJRnorm[\calF]{f}$, but under reasonable assumptions, the extra cost is small.

\begin{corollary} \label{cor:tAsameCosthA} Suppose that the sequence $\bsn$ is chosen to satisfy
\begin{equation} \label{lambdaDecay}
\lambda_{n_{j+1}+1} \ge c_\lambda \lambda_{n_j+1}, \qquad j \in \bbN, 
\end{equation}
for some positive $c_\lambda$.  Then $\textup{cost}(\widetilde{A},\calC, \varepsilon,\rho)$ is essentially no worse than \linebreak[4]
$\textup{cost}(\widehat{A},\calB_{\rho},\varepsilon)$ in the sense of \eqref{essNoWorseDef}.

\end{corollary}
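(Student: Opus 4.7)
The plan is to combine the cost bound of Theorem \ref{thm:compcost} with the non-degeneracy assumption \eqref{lambdaDecay}, which prevents the subsampled singular values $\lambda_{n_j+1}$ from dropping by more than a factor of $c_\lambda$ per step in $j$. Intuitively, this guarantees that the grid $\bsn$ is dense enough that restricting $\widetilde{A}$ to sample only at indices in $\bsn$ costs at most a constant shift in the tolerance, and the natural candidate constant is $\omega := c_\lambda^2 \sqrt{1 - b^2}/(ab)$.

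First I would start from the coarser upper bound \eqref{jdaggerlast}, which says that $j^\dagger \le \min\{j \in \bbN : \lambda_{n_{j-1}+1} \le \tau \varepsilon/\rho\}$ with $\tau := \sqrt{1-b^2}/(ab)$. Taking $j^\dagger$ equal to this minimum (which is the worst case and thus what we must control), minimality yields $\lambda_{n_{j^\dagger - 2}+1} > \tau \varepsilon/\rho$ whenever $j^\dagger \ge 2$. Iterating the hypothesis \eqref{lambdaDecay} twice then gives
\begin{equation*}
\lambda_{n_{j^\dagger}+1} \;\ge\; c_\lambda \lambda_{n_{j^\dagger-1}+1} \;\ge\; c_\lambda^2 \lambda_{n_{j^\dagger-2}+1} \;>\; c_\lambda^2 \tau\,\frac{\varepsilon}{\rho} \;=\; \omega\,\frac{\varepsilon}{\rho}.
\end{equation*}
Hence $n_{j^\dagger}$ fails to satisfy the inequality $\lambda_{n+1} \le \omega \varepsilon/\rho$ that defines $\textup{cost}(\widehat{A}, \calB_\rho, \omega\varepsilon) = \min\{n : \lambda_{n+1} \le \omega\varepsilon/\rho\}$, so $n_{j^\dagger} < \textup{cost}(\widehat{A}, \calB_\rho, \omega\varepsilon)$. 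Combined with $\textup{cost}(\widetilde{A}, \calC, \varepsilon, \rho) \le n_{j^\dagger}$ from Theorem \ref{thm:compcost}, this is exactly the comparison \eqref{essNoWorseDef} with the claimed $\omega$.

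The main obstacle will be the boundary case $j^\dagger = 1$, in which the inequality $\lambda_{n_{j^\dagger-2}+1} > \tau\varepsilon/\rho$ is vacuous because the index is nonpositive. This case is triggered exactly when $\lambda_{n_0+1} \le \tau\varepsilon/\rho$, i.e., when the tolerance is so loose that the non-adaptive oracle $\widehat{A}$ may succeed with essentially no samples, while $\widetilde{A}$ still pays the mandatory startup cost of $n_1$ samples. I would handle this regime separately by either weakening $\omega$ so that $\textup{cost}(\widehat{A}, \calB_\rho, \omega\varepsilon) \ge n_1$ whenever this boundary is triggered (exploiting that only a finite range of tolerances can lead here before $\widehat{A}$'s own cost exceeds $n_1$), or by absorbing the additive $n_1$ constant into the comparison. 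Once this boundary is dispatched, the two-step cascade above handles all remaining $j^\dagger \ge 2$ uniformly in $\varepsilon$ and $\rho$.
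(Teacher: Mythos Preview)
Your argument is essentially identical to the paper's: both start from the coarse bound \eqref{jdaggerlast}, use minimality to get $\lambda_{n_{j^\dagger-2}+1} > \tau\varepsilon/\rho$, iterate \eqref{lambdaDecay} twice to obtain $\lambda_{n_{j^\dagger}+1} > c_\lambda^2\tau\,\varepsilon/\rho$, and conclude $n_{j^\dagger} < \textup{cost}(\widehat{A},\calB_\rho,\omega\varepsilon)$ with the same constant $\omega = c_\lambda^2\sqrt{1-b^2}/(ab)$.

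You are in fact more careful than the paper on one point: the paper's proof silently assumes $j^\dagger \ge 2$ so that $n_{j^\dagger-2}$ makes sense, and never addresses the boundary case you flag. Your diagnosis of that case is accurate---when $\lambda_{n_0+1} \le \tau\varepsilon/\rho$ the adaptive algorithm still pays its mandatory $n_1$ samples while $\widehat{A}$ may pay zero, and no fixed $\omega$ in the literal inequality \eqref{essNoWorseDef} can absorb this for arbitrarily large $\varepsilon$. So your instinct to treat this separately (via an additive constant or a restriction on the tolerance range) is sound; the paper simply does not engage with it.
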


\begin{proof}
Combining the upper bound on $n_{j^\dagger} = \textup{cost}(\widetilde{A},\calC,\varepsilon,\rho)$ in \eqref{jdaggerlast} plus  \eqref{lambdaDecay} above, it follows that
\begin{equation*}
\lambda_{n_{j^\dagger}+1} \ge c_{\lambda}^2 \lambda_{n_{j^\dagger-2}+1} > \frac{\varepsilon c_{\lambda}^2\sqrt{1 - b^2}}{ab \rho} \ge \lambda_{n+1},
\end{equation*}
where $n = \textup{cost}(\widehat{A},\calB_{\rho},\varepsilon c_{\lambda}^2\sqrt{1 - b^2}/ab)$.
Since the $\lambda_i$ are non-increasing,
\begin{equation*}
\textup{cost}(\widetilde{A},\calC,\varepsilon,\rho) = n_{j^\dagger} \le n_{j^\dagger}+1 < n = \textup{cost}(\widehat{A},\calB_{\rho},\varepsilon c_{\lambda}^2\sqrt{1 - b^2}/ab).
\end{equation*}
\end{proof}

%%%%%%%%%%%%%%%%%%%%%%%%%%%%%%%%%%%%%%%%%%%%%%%%%%%%%%%%%%%%%%%%%%%%%%%%%%%%%%%%%%
\section{Essential Optimality of the Adaptive Algorithm} \label{sec:opt}

From Corollary \ref{cor:tAsameCosthA} it is known that $\textup{cost}(\widetilde{A},\calC, \varepsilon,\rho)$ is essentially no worse than
$\textup{cost}(\widehat{A},\calB_{\rho},\varepsilon) = \textup{comp}(\calA(\calB_{\rho}),\varepsilon)$.  We would like to show that $\widetilde{A} \in \calA(\calC)$ is  essentially optimal, i.e., $\textup{cost}(\widetilde{A},\calC, \varepsilon,\rho)$ is essentially no worse than  $\textup{comp}(\calA(\calC),\varepsilon,\rho)$.  However,  $\textup{comp}(\calA(\calC),\varepsilon,\rho)$ may be smaller than $\textup{comp}(\calA(\calB_{\rho}),\varepsilon)$ because $\calC \cap \calB_\rho$ is a strict subset of  $ \calB_\rho$.  This presents a challenge.

A lower bound on $\textup{comp}(\calA(\calC),\varepsilon,\rho)$ is established by constructing fooling functions in $\calC$ with norms no greater than $\rho$.  To obtain a result that can be compared with the cost of our algorithm, we assume that 
\begin{equation} \label{lambdaRatio}
R = \sup_{k \in \bbN} \frac{\lambda_{n_{k-1}}}{\lambda_{n_k}} < \infty.
\end{equation}
This means that the $n_k$ are not too far apart with respect to the decay of $\lambda_i$ as $i \to \infty$.

The following theorem establishes a lower bound on the complexity of our problem for input functions in $\calC$.  The theorem after that shows that the cost of our algorithm as given in Theorem \ref{thm:compcost} is essentially no worse than this lower bound.

\begin{theorem} \label{thm:lowbdcomp}
Under assumption  \eqref{lambdaRatio}, a lower bound on the complexity of the linear problem defined in \eqref{probDef} is
\begin{align*}
 %\label{compbdA}
&\textup{comp}(\calA(\calC),\varepsilon,\rho) \ge n_{j^*}, 
\intertext{where}
%\label{compbdB}
j^* & = \max \left \{ j \in \bbN : \left[\frac{(a+1)^{2} R^2 }{(a-1)^2} + 1\right] \sum_{k=0}^j \frac{b^{2(k-j)}}{\lambda_{n_{k}}^2}   <
\frac{\rho^2}{\varepsilon^2}
\right \}.
\end{align*}
\end{theorem}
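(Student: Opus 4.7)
The strategy is the fooling-function / symmetrization argument adapted to the non-convex cone $\calC$. Observe first that $\calC$ is symmetric under negation, because $\sigma_j(-g)=\sigma_j(g)$, so $g\in\calC$ iff $-g\in\calC$. Fix any algorithm $A^*\in\calA(\calC)$ and let $L_1,\ldots,L_N$ be the linear functionals it adaptively queries on input $0$ with tolerance $\varepsilon$. If one can exhibit a single $g\in\calC\cap\calB_\rho$ with $L_i(g)=0$ for $i=1,\ldots,N$ and $\|S(g)\|_\calG>\varepsilon$, then $A^*$ performs the identical adaptive sequence of queries on each of $0$, $g$, and $-g$, so $A^*(g,\varepsilon)=A^*(-g,\varepsilon)=A^*(0,\varepsilon)$. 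The triangle inequality
\[
2\varepsilon \;\ge\; \|S(g)-A^*(g,\varepsilon)\|_\calG + \|S(-g)-A^*(-g,\varepsilon)\|_\calG \;\ge\; 2\|S(g)\|_\calG
\]
then contradicts $\|S(g)\|_\calG>\varepsilon$. Therefore it suffices to show that whenever $N<n_{j^*}$ such a fooling function $g$ exists, for then $\textup{cost}(A^*,\calC,\varepsilon,\rho)\ge n_{j^*}$ for every $A^*$.

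To build $g$, decompose the first $n_{j^*}$ coordinates block-wise as $V:=\operatorname{span}(u_1,\ldots,u_{n_{j^*}})=\bigoplus_{k=0}^{j^*}V_k$ with $V_k:=\operatorname{span}(u_{n_{k-1}+1},\ldots,u_{n_k})$ and $n_{-1}:=0$, so $\dim V=n_{j^*}>N$ and the kernel $K:=V\cap\bigcap_{i=1}^N \ker L_i$ has dimension at least $1$. For any $g\in K$, write $g=\sum_k g_k$ with $g_k\in V_k$ and $\tau_k:=\|g_k\|_\calF$; the fact that $\lambda_i$ is non-increasing gives the two-sided bounds
\[
\lambda_{n_k}\tau_k \;\le\; \sigma_k(g) \;\le\; \lambda_{n_{k-1}+1}\tau_k.
\]
I would seek $g\in K$ whose block norms follow the geometric profile $\tau_k \propto b^{k-j^*}/\lambda_{n_k}$, so that the $\sigma_k(g)$ decay proportionally to $b^{k-j^*}$ and $g$ lies just inside the cone boundary $\sigma_{k+r}\le ab^r\sigma_k$. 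With such a profile, direct computation gives
\[
\|g\|_\calF^2 \;=\; \sum_{k=0}^{j^*}\tau_k^2 \;\propto\; \sum_{k=0}^{j^*}\frac{b^{2(k-j^*)}}{\lambda_{n_k}^2}, \qquad \|S(g)\|_\calG^2 \;=\; \sum_{k=0}^{j^*}\sigma_k(g)^2 \;\propto\; \sum_{k=0}^{j^*}b^{2(k-j^*)},
\]
and normalizing so that $\|g\|_\calF=\rho$ and then requiring $\|S(g)\|_\calG>\varepsilon$ yields exactly the inequality that defines $j^*$ in the theorem statement.

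The main obstacle is the construction step: a bare dimension count produces an element of $K$ but not one with a prescribed block profile, and $\calC$ is non-convex so standard convex-geometric lower-bound tools do not apply directly. The argument must combine the $N$ linear constraints defining $K$ with the nonlinear cone condition, presumably through an extremal or min-max argument over $K$ that optimizes the block-norm profile. This is also where the constant $\tfrac{(a+1)^2R^2}{(a-1)^2}+1$ enters: the ratio $(a+1)/(a-1)$ supplies the slack needed so that an \emph{approximately} geometric profile satisfies the strict cone inequality $\sigma_{j+r}\le ab^r\sigma_j$ with inflation parameter $a$, while assumption \eqref{lambdaRatio} supplies the factor $R$ that bridges the two-sided bounds $\lambda_{n_k}\tau_k\le\sigma_k(g)\le\lambda_{n_{k-1}+1}\tau_k$ when the cone condition couples $\sigma_k$ from one block to $\sigma_{k+r}$ from a later block. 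Once the construction is secured, the symmetrization argument of the first paragraph closes out the proof by ruling out every algorithm with cost less than $n_{j^*}$.
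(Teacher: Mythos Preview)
Your proposal correctly identifies the symmetrization strategy and the relevance of the geometric block profile $\tau_k \propto b^{k-j}/\lambda_{n_k}$, and you have honestly flagged the obstacle: one cannot in general find $g$ in the kernel $K$ with a \emph{prescribed} block profile. This obstacle is fatal to your approach as stated. When $N = n_{j^*}-1$, the kernel $K$ is one-dimensional and its block norms are whatever they happen to be; no extremal or min--max argument over $K$ can force them into the geometric pattern needed for cone membership. So the construction step, which you leave as a hope, does not go through.

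The paper resolves this not by working harder inside $K$ but by \emph{not centering at $0$}. It first defines a function $f$ (subject to no kernel constraint) with the exact geometric profile $\widehat{f}_{n_k} = c\,b^{k-j}/\lambda_{n_k}$, so $f\in\calC\cap\calB_\rho$ by design. It then runs the optimal algorithm on \emph{this} $f$, records the adaptively chosen functionals $L_1,\ldots,L_n$, and only afterwards picks $u\in\operatorname{span}(u_1,\ldots,u_{n_j})$ with $L_i(u)=0$, $\langle u,f\rangle_\calF=0$, and $\max_k\DHJRnorm[\calF]{u^{(k)}}=1$. The fooling pair is $f_\pm := f \pm \eta u$ with $\eta = \tfrac{(a-1)c}{(a+1)R}$. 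Crucially, $u$ may have an \emph{arbitrary} block profile: cone membership of $f_\pm$ is inherited from $f$, whose strict geometric decay absorbs the small perturbation $\eta u$ block by block (this is precisely where $(a-1)/(a+1)$ and $R$ enter). Since $L_i(f_\pm)=L_i(f)$, the algorithm outputs the same answer on $f_+$ and $f_-$, and the triangle inequality yields $\varepsilon \ge \eta\,\DHJRnorm[\calG]{S(u)} \ge \eta$, from which the bound in the theorem follows.

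In short, the missing idea is to decouple the cone constraint from the kernel constraint by perturbing around a carefully designed nonzero $f$ rather than around $0$.
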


\begin{proof}
Consider a fixed $\rho$ and $\varepsilon$.  Choose any positive integer $j$ such that $n_j$ exceeds $\textup{comp}(\calA(\calC),\varepsilon,\rho)$.  The proof proceeds by carefully constructing three test input functions, $f$ and $f_{\pm}$, lying in $\calC \cap \calB_{\rho}$, which yield the same approximate solution but different true solutions.  This leads to a lower bound on $n_j$, which can be translated into a lower bound on $\textup{comp}(\calA(\calC),\varepsilon,\rho)$. 

The first test function $f \in \calC$ is defined in terms of its series coefficients:
\begin{align}
\nonumber
\widehat{f}_i &:= \begin{cases}
\displaystyle
\frac{c b^{k-j}}{\lambda_{n_{k}}},  & i =  n_{k}, \ k = 1, \ldots, j,
\\
0, & \text{otherwise},
\end{cases}
\\
\nonumber
c^2 &:=  \rho^2 \left[ \left(1+\frac{(a-1)^2}{(a+1)^{2} R^2 }\right)\sum_{k=0}^j \frac{b^{2(k-j)}}{\lambda_{n_{k}}^2}\right]^{-1}.
\end{align}
It can be verified that the test function lies both in $\calB_{\rho}$ and in $\calC$:
\begin{align}
\nonumber
\DHJRnorm[\calF]{f}^2 &=  c^2\sum_{k=1}^j \frac{ b^{2(k-j)}}{\lambda^2_{n_{k}}} \le \rho^2,
\\
\nonumber
\sigma_k(f) &= \begin{cases}
\displaystyle
c b^{k-j}, & k =1, \ldots, j, \\
0, & \text{otherwise},
\end{cases}
\\
\nonumber
\sigma_{k+r}(f) &= \begin{cases}
\displaystyle 
b^{r} \sigma_k(f) \le a b^r \sigma_k(f), & k+r \le j, \ r \ge 1,
\\
0 \le a b^r \sigma_k(f), & k+r > j, \ r \ge 1.
\end{cases}
\end{align}

Now suppose that $A^* \in \calA(\calC)$ is an optimal algorithm, i.e., $\textup{cost}(A^*,\calC,\varepsilon,\rho) =  \textup{comp}(\calA(\calC),\varepsilon,\rho)$ for all $\varepsilon, \rho > 0$.  For our particular input $f$ defined above, suppose that $A^*(f,\varepsilon)$ samples $L_1(f), \ldots, L_n(f)$ where 
\[
n + 1\le \textup{comp}(\calA(\calC),\varepsilon,\rho) +1 < n_j.
\]  

Let $u$ be a linear combination of $u_1, \cdots, u_{n_j}$, expressed as
\[
u =  \sum_{k=0}^{j}\frac{b^{k-j}u^{(k)}}{\lambda_{n_k}},
\]
where $u^{(0)}$ is a linear combination of $u_{1}, \ldots, u_{n_0}$, and each $u^{(k)}$ is a linear combination of $u_{n_{k-1}+1}, \ldots, u_{n_k}$, for $k =1, \ldots, j$.  We constrain $u$ to satisfy:
\begin{equation}\label{uConstraint}
L_1(u) = \cdots = L_n(u) = 0, \qquad \langle u,f \rangle_{\calF} = 0, \qquad 
\max_{0\le k\le j} \DHJRbignorm[\calF]{u^{(k)}} = 1.
\end{equation}
Since $u$ is a linear combination of $n_j >n+1$ basis functions, these constraints can be satisfied.

Let the other two test functions be constructed in terms of $u$ as 
\begin{align}
\nonumber
f_\pm & := f \pm \eta u, \qquad \eta : =  \frac {(a-1)c}{(a+1)R}, \\
\nonumber
\DHJRnorm[\calF]{f_\pm}^2 & \le \DHJRnorm[\calF]{f}^2 + \DHJRnorm[\calF]{\eta u }^2 \qquad \text{by \eqref{uConstraint}} \\
& \nonumber 
\le \sum_{k=1}^j \frac{ b^{2(k-j)}}{\lambda^2_{n_{k}}} \left(c^2+ \eta^2 \DHJRbignorm[\calF]{u^{(k)}}^2\right) + \eta^2 \DHJRbignorm[\calF]{u^{(0)}}^2 \frac{b^{-2j}}{\lambda_{n_0}^2}\\
\nonumber
& \le  \left(c^2+ \eta^2 \right) \sum_{k=0}^j \frac{ b^{2(k-j)}}{\lambda^2_{n_{k}}}  \qquad \text{by \eqref{uConstraint}}\\
& \nonumber 
\le \rho^2,
\end{align} 
so $f_{\pm} \in \calB_{\rho}$.  By design, $A^*(f_\pm,\varepsilon) = A^*(f,\varepsilon)$, which will be used below.

Now we must check that $f_\pm \in \calC$. From the definition in \eqref{sumdef} it follows that for $k = 1, \ldots, j$ and $r \ge 1$,
\begin{equation*}
\sigma_k(f_\pm)  \begin{cases} 
\displaystyle
\le \sigma_k(f) + \sigma_k(\eta u)\le 
c b^{k-j} + \eta\lambda_{n_{k-1}+1}\frac{b^{k-j}\DHJRnorm[\calF]{u^{(k)} } }{\lambda_{n_k}}
\le b^{k-j}\left(c+\eta R \right) 
\\[1ex]
\displaystyle
\ge \sigma_k(f) - \sigma_k(\eta u)\ge 
c b^{k-j} - \eta\lambda_{n_{k-1}+1}\frac{b^{k-j}\DHJRnorm[\calF]{u^{(k)} } }{\lambda_{n_k}}
\ge b^{k-j}\left(c-\eta R \right) , 
\end{cases}
\end{equation*}
Therefore, 
\begin{equation*}
\sigma_{k+r}(f_\pm)
\le b^{k+r-j}(c+\eta R) = ab^r b^{k-j}\frac{2c}{a+1}
=ab^r b^{k-j}\left(c-\eta R \right) \le a b^r \sigma_{k}(f_\pm),
\end{equation*}
which establishes that $f_\pm \in \calC$.

Although two test functions $f_\pm$ yield the same approximate solution, they have different true solutions.  In particular,
\begin{align*}
\varepsilon &\ge \max \bigl\{\DHJRnorm[\calG]{S(f_+) - A^*(f_+,\varepsilon)}, \DHJRnorm[\calG]{S(f_-) - A^*(f_-,\varepsilon)} \bigr\} \\
&\ge \frac 12 \bigl[\DHJRnorm[\calG]{S(f_+) - A^*(f,\varepsilon)} + \DHJRnorm[\calG]{S(f_-) - A^*(f,\varepsilon)}  \bigr] \\
&\qquad \qquad \text{since } A^*(f_\pm,\varepsilon) = A^*(f,\varepsilon) \\
&\ge \frac 12 \DHJRnorm[\calG]{S(f_+) - S(f_-)} \quad \text{by the triangle inequality}\\
&\ge \frac 12 \DHJRnorm[\calG]{S(f_+ - f_-)} \quad \text{since $S$ is linear}\\
&= \eta \DHJRnorm[\calG]{S(u)}.
\end{align*}
Thus, we have
\begin{align*}
\varepsilon^2  & \ge \eta^2 \DHJRnorm[\calG]{S(u)}^2= 
\eta^2 \sum_{k=0}^{j} \DHJRbignorm[\calG]{S(u^{(k)})}^2  \frac{b^{2(k-j)}}{\lambda_{n_k}^2}\\
& \ge \eta^2 
\sum_{k=0}^{j} \DHJRbignorm[\calF]{u^{(k)}}^2 
b^{2(k-j)}\\
& \ge  \eta^2 b^{2(k^{*}-j)} \qquad \text{ where } k^* = \argmax_{0 \le k \le j} \DHJRbignorm[\calF]{u^{(k)}} \\
& \ge \eta^2 = \frac{(a-1)^2c^2}{(a+1)^2R^2} \\
&
=\frac{(a-1)^2 \rho^2}{(a+1)^2R^2}\left[\left(1+\frac{(a-1)^2}{(a+1)^{2} R^2 }\right)\sum_{k=0}^j \frac{b^{2(k-j)}}{\lambda_{n_{k}}^2}\right]^{-1} \\
& =  \rho^2 \left[\left\{\frac{(a+1)^{2} R^2 }{(a-1)^2} + 1\right\}\sum_{k=0}^j \frac{b^{2(k-j)}}{\lambda_{n_{k}}^2}\right]^{-1} 
\end{align*}

This lower bound must be satisfied by $j$ to be consistent with the assumption $\textup{comp}(\calA(\calC),\varepsilon,\rho) \le n_j - 1$.  Thus, for any $j$ violating this inequality it follows that $\textup{comp}(\calA(\calC),\varepsilon,\rho) \ge n_j$.  This implication provides a lower bound on $\textup{comp}(\calA(\calC),\varepsilon,\rho)$.
\end{proof}

The next step is to show that the cost of our algorithm is essentially no worse than that of the optimal algorithm.

\begin{theorem}
\label{thm:CostNoWorse}
Under assumption \eqref{lambdaRatio} $\textup{cost}(\widetilde{A},\calC, \varepsilon,\rho)$ is essentially no worse than $\textup{comp}(\calA(\calC),\varepsilon,\rho)$. 
\end{theorem}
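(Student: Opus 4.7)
The plan is to sandwich $\textup{cost}(\widetilde{A},\calC,\varepsilon,\rho)$ and $\textup{comp}(\calA(\calC),\omega\varepsilon,\rho)$ between the two implicitly defined integers produced by Theorems \ref{thm:compcost} and \ref{thm:lowbdcomp}, and show they agree up to a constant $\varepsilon$-inflation. Set $\tau:=(\rho/\varepsilon)^2$, and let $g_\dagger(j)$ denote the upper-bound expression on the right-hand side of \eqref{jdagger} and $g_*(j)$ denote the left-hand side of the defining inequality for $j^*$ in Theorem \ref{thm:lowbdcomp}, so that
\[
j^\dagger(\tau) = \min\{j\in\bbN : g_\dagger(j)\ge \tau\}, \qquad j^*(\tau) = \max\{j\in\bbN : g_*(j)< \tau\}.
\]
Both $g_\dagger$ and $g_*$ are strictly increasing in $j$.

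The hard part will be to prove a clean multiplicative inequality $g_*(j) \le C\,g_\dagger(j)$ for every $j$, with $C$ depending only on $a$, $b$ and $R$. For each summand of $g_*$ with $k\ge 1$, assumption \eqref{lambdaRatio} gives $\lambda_{n_{k-1}+1}\le \lambda_{n_{k-1}}\le R\lambda_{n_k}$, hence $1/\lambda_{n_k}^2 \le R^2/\lambda_{n_{k-1}+1}^2$, which reproduces exactly the structure of the summands inside $g_\dagger(j)$; the boundary index $k=j$ matches the lone $1/\lambda_{n_{j-1}+1}^2$ term sitting outside that sum, and a factor $a^2$ absorbs the $1/a^2$ discrepancy in front of the $g_\dagger$ sum. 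The $k=0$ boundary term $b^{-2j}/\lambda_{n_0}^2$ of $g_*$ must be handled separately: using $\lambda_{n_0+1}\le \lambda_{n_0}$ together with the observation that the $k=1$ entry of $g_\dagger(j)$ already contributes $\frac{1-b^2}{a^4}\cdot b^{-2j}/\lambda_{n_0+1}^2$, one obtains $b^{-2j}/\lambda_{n_0}^2 \le \frac{a^4}{1-b^2}\,g_\dagger(j)$, completing the comparison with an explicit $C=C(a,b,R)$.

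A second, much simpler ingredient is a one-step growth bound on $g_*$: a direct calculation gives $g_*(j+1) = g_*(j)/b^2 + C_*/\lambda_{n_{j+1}}^2$ with $C_* := (a+1)^2R^2/(a-1)^2+1$, and \eqref{lambdaRatio} combined with the fact that the $k=j$ term of $g_*(j)$ already contributes $C_*/\lambda_{n_j}^2$ yields $g_*(j+1)\le (1/b^2+R^2)g_*(j) =: B\,g_*(j)$.

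Finally I would combine these two ingredients. Minimality of $j^\dagger(\tau)$ gives $g_\dagger(j^\dagger(\tau)-1)<\tau$, so $g_*(j^\dagger(\tau)-1)<C\tau$ by the comparison, and the definition of $j^*$ then yields $j^\dagger(\tau)\le j^*(C\tau)+1$. The growth bound absorbs the $+1$ into an extra factor $B$: from $g_*(j^*(C\tau)+1)\le B\,g_*(j^*(C\tau))<BC\tau$ one gets $j^*(C\tau)+1\le j^*(BC\tau)$. Setting $\omega:=1/\sqrt{BC}$ so that $BC\tau = (\rho/(\omega\varepsilon))^2$, Theorems \ref{thm:compcost} and \ref{thm:lowbdcomp} then give
\[
\textup{cost}(\widetilde{A},\calC,\varepsilon,\rho)
\le n_{j^\dagger(\tau)}
\le n_{j^*(BC\tau)}
\le \textup{comp}(\calA(\calC),\omega\varepsilon,\rho),
\]
which is exactly the condition \eqref{essNoWorseDef}.
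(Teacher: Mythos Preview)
Your proposal is correct and follows essentially the same route as the paper: both arguments compare the implicit sums in Theorems~\ref{thm:compcost} and~\ref{thm:lowbdcomp} via assumption~\eqref{lambdaRatio} to conclude $j^\dagger \le j^*$ at an inflated tolerance $\omega\varepsilon$. The paper carries this out as a single chain of inequalities on the sum---absorbing the lone term, shifting the index, replacing $\lambda_{n_k+1}$ by $\lambda_{n_k}$, and then extending the upper summation limit by two using \eqref{lambdaRatio} at the price of the factor $(1+b^2R^2+b^4R^4)$---arriving directly at the defining expression for $j^*$; you instead package the same estimates into a comparison lemma $g_*(j)\le C\,g_\dagger(j)$ together with a one-step growth bound $g_*(j+1)\le B\,g_*(j)$ and combine them abstractly. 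The resulting constants $\omega$ differ, but only existence of some $\omega$ is required. One small point to tidy: your treatment of the $k=0$ boundary term in $g_*$ invokes the $k=1$ summand of $g_\dagger$, which is absent when $j=1$ (the bracket in \eqref{jdagger} then contains only the lone $1/\lambda_{n_0+1}^2$ term); that lone term serves equally well, so the comparison still goes through with the same constant.
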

\begin{proof}
Let  
\begin{equation} \label{omegadef}
    \omega = \sqrt{\frac{(1 - b^2)}{a^4(1 + b^2R^2 + b^4R^4)}\left[\frac{(a+1)^{2} R^2 }{(a-1)^2} + 1\right]^{-1}},
\end{equation}
and note that it does not depend on $\rho$ or $\varepsilon$ but only on the definition of $\calC$. 
For any positive $\rho$ and $\varepsilon$, Theorem \ref{thm:compcost} says that $\textup{cost}(\widetilde{A},\calC,\varepsilon,\rho) \le n_{j^\dagger}$, where 
\begin{align*} 
j^\dagger &\le \min \left \{j \in \bbN : \frac{\rho^2}{\varepsilon^2} \le \frac{(1 - b^2)}{a^2b^2} \left[ \sum_{k=1}^{j-1} \frac{b^{2(k-j)}}{a^2\lambda_{n_{k-1}+1}^2} + \frac{1}{\lambda_{n_{j-1}+1}^2}\right]   \right\} \\
&\le \min \left \{j \in \bbN : \frac{\rho^2}{\varepsilon^2} \le \frac{(1 - b^2)}{a^4b^2} \sum_{k=1}^{j} \frac{b^{2(k-j)}}{\lambda_{n_{k-1}+1}^2} \right\} \qquad \text{since } a > 1\\
&\le \min \left \{j \in \bbN : \frac{\rho^2}{\varepsilon^2} \le \frac{(1 - b^2)}{a^4} \sum_{k=0}^{j-1} \frac{b^{2(k-j)}}{\lambda_{n_{k}+1}^2} \right\} \\
&\le \min \left \{j \in \bbN : \frac{\rho^2}{\varepsilon^2} \le \frac{(1 - b^2)}{a^4} \sum_{k=0}^{j-1} \frac{b^{2(k-j)}}{\lambda_{n_{k}}^2} \right\} \qquad \text{since } \lambda_{n_k} \ge \lambda_{n_k+1}\\
&\le \min \left \{j \in \bbN : \frac{\rho^2}{\varepsilon^2} \le \frac{(1 - b^2)}{a^4(1 + b^2R^2 + b^4R^4)} \sum_{k=0}^{j+1} \frac{b^{2(k-j)}}{\lambda_{n_{k}}^2} \right\} \qquad \text{by \eqref{lambdaRatio} } \\
&\le \min \left \{j \in \bbN : \frac{\rho^2}{\omega^2 \varepsilon^2} \le \left[\frac{(a+1)^{2} R^2 }{(a-1)^2} + 1\right] \sum_{k=0}^{j+1} \frac{b^{2(k-j)}}{\lambda_{n_{k}}^2} \right\} \qquad \text{by \eqref{omegadef} } \\
&= \max \left \{j \in \bbN : \frac{\rho^2}{\omega^2 \varepsilon^2} > \left[\frac{(a+1)^{2} R^2 }{(a-1)^2} + 1\right] \sum_{k=0}^{j} \frac{b^{2(k-j)}}{\lambda_{n_{k}}^2} \right\} =:j^*.
\end{align*}
By Theorem \ref{thm:lowbdcomp}, $\textup{comp}(\calA(\calC),\omega\varepsilon,\rho) \ge n_{j^*}$, and by the argument above, $n_{j^*} \ge n_{j^\dagger} \ge \textup{cost}(\widetilde{A},\calC,\varepsilon,\rho)$.  Thus, our algorithm is essentially no more costly than the optimal algorithm.
\end{proof}

%%%%%%%%%%%%%%%%%%%%%%%%%%%%%%%%%%%%%%%%%%%%%%%%%%%%%%%%%%%%%%%%%%%%%%%%%%%%%%%%%%
\section{Numerical Example} \label{sec:examp}

Consider the case of approximating the  partial derivative with respect to $x_1$ of periodic functions defined on the $d$-dimensional unit cube:
\begin{align*}
\allowdisplaybreaks
f &= \sum_{\bsk \in \bbZ^d} \widehat{f}(\bsk) \widehat{u}_{\bsk} = \sum_{i \in \bbN} \widehat{f}_i u_i , \\
\widehat{u}_{\bsk} (\bsx) &:= \prod_{j=1}^d \frac{2^{(1-\delta_{k_j,0})/2}\cos(2\pi k_j x_j  + \bbone_{(-\infty,0)}(k_j) \pi/2 )}{\max^4(1,\gamma_j k_j)},  \\ 
S(f) &: = \frac{\partial f}{\partial x_1} = \sum_{\bsk \in \bbZ^d} \widehat{f}(\bsk) \lambda(\bsk) \widehat{v}_{\bsk} (\bsx)
= \sum_{i \in \bbN} \widehat{f}_i \lambda_i v_i, \\ 
\widehat{v}_{\bsk}(\bsx) & : =  -\textup{sign}(k_1)  \sin(2\pi k_1 x_1  + \bbone_{(-\infty,0)}(k_1) \pi/2 ) \\
& \qquad \qquad \times \prod_{j=2}^d
\cos(2\pi k_j x_j  + \bbone_{(-\infty,0)}(k_j) \pi/2 ), \\ 
\lambda(\bsk) &:= 2 \pi \DHJRabs{k_1}\frac{\prod_{j=1}^d 2^{(1-\delta_{k_j,0})/2}}{\prod_{j=1}^d\max^4(1,\gamma_j k_j)}, \\
\boldsymbol{\gamma} & := (1,1/2, 1/4, \ldots, 2^{-d+1}).
\end{align*}
Note that $\lambda_1 \ge \lambda_2 \ge \cdots$ is an ordering of the $\lambda(\bsk)$.  That ordering then determines the $\widehat{f}_i, u_i$, and $v_i$ in terms of the $\widehat{f}(\bsk), \widehat{u}(\bsk)$, and $\widehat{v}(\bsk)$, respectively.

We construct a function by choosing its Fourier coefficients $\widehat{f}(\bsk) \overset{\text{IID}}{\sim}  \mathcal{N}(0,1)$ for $d=3$, $\bsk \in \{-30, -29, \ldots, 30\}^3$, and $\widehat{f}(\bsk) = 0$ otherwise.  This corresponds to $61^3 \approx 2 \times 10^5$ nonzero Fourier coefficients.  Let $ a= 2$ and  $b=1/2$ and 
 choose $\bsn = \{0, 16, 32, 64, \ldots \}.$
 To  compute $\sigma_j(f), \ j \in \bbN$ by \eqref{sumdef},
 we need to sort $\bigl(\lambda(\bsk)\bigr)_{\bsk \in \bbZ^d}$ in descending order, $\lambda_1, \lambda_2, \ldots$. Given $\varepsilon$, we can then find the number of series coefficients needed to satisfy the the error criterion, i.e., $n_{j^\dagger}$ where
 \[j^{\dagger} = \min \left\{j \in \bbN : \frac{ab\sigma_j(f)}{\sqrt{1-b^2}}  \le \varepsilon. \right\}\]
 
Fig.\ \ref{solfig} shows the input function, the solution, the approximate solution, and the error of the approximate solution for $\varepsilon = 0.1$.  For this example, $ n_{j^\dagger} = 8192$ is sufficient to satisfy the error tolerance, as is clear from Fig.\ \ref{solfig}(d).  Fig. \ref{errfig} shows the sample size, $n_{j^\dagger}$ needed for ten different error tolerances from $0.1$ to $10$. Because the possible sample sizes are powers of $2$ , some tolerances require the same sample size.

\begin{figure}[ht]
	\centering
	\begin{tabular}{cc}
		\includegraphics[width =5.5 cm]{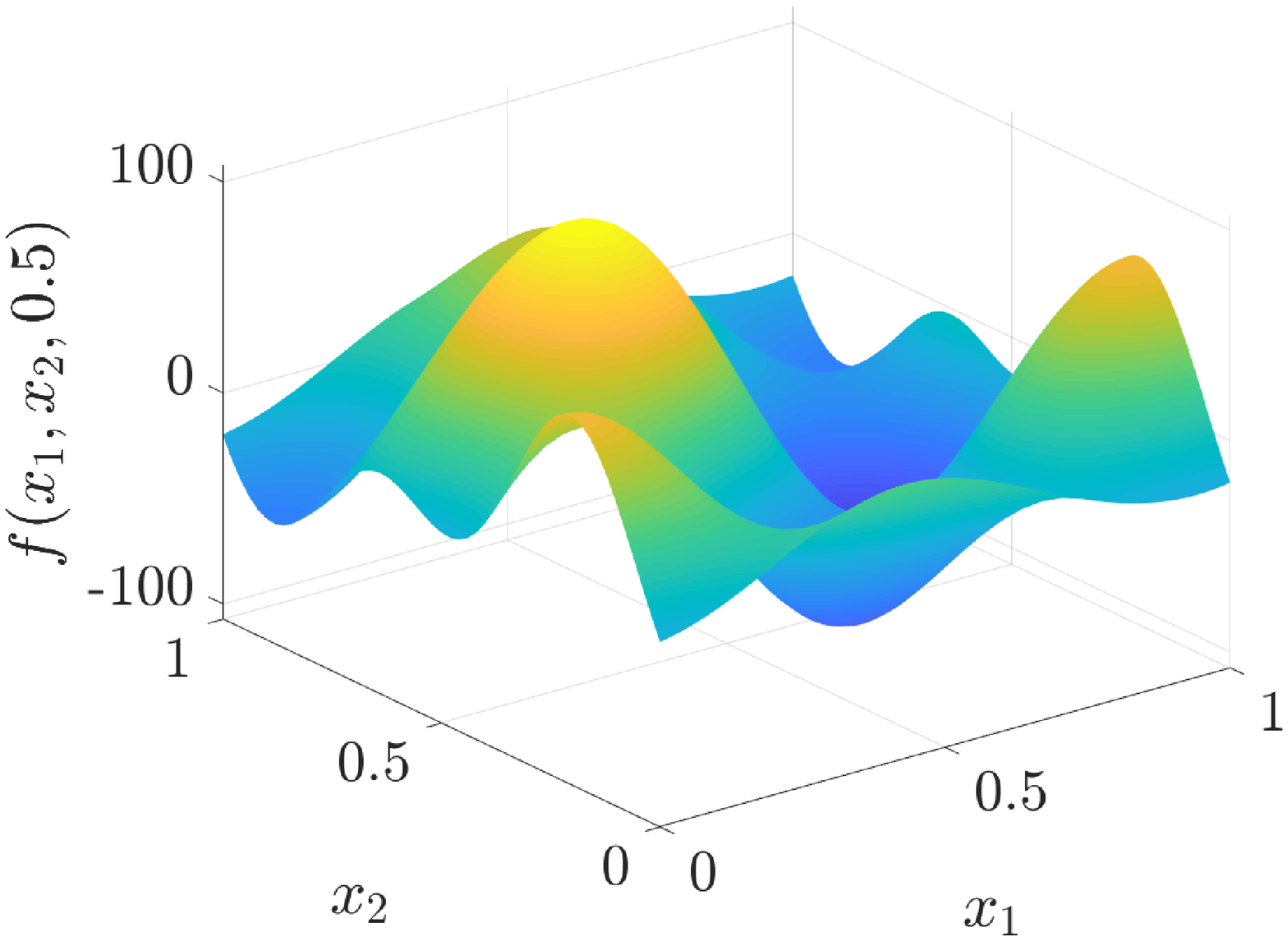} &
		\includegraphics[width = 5.5 cm]{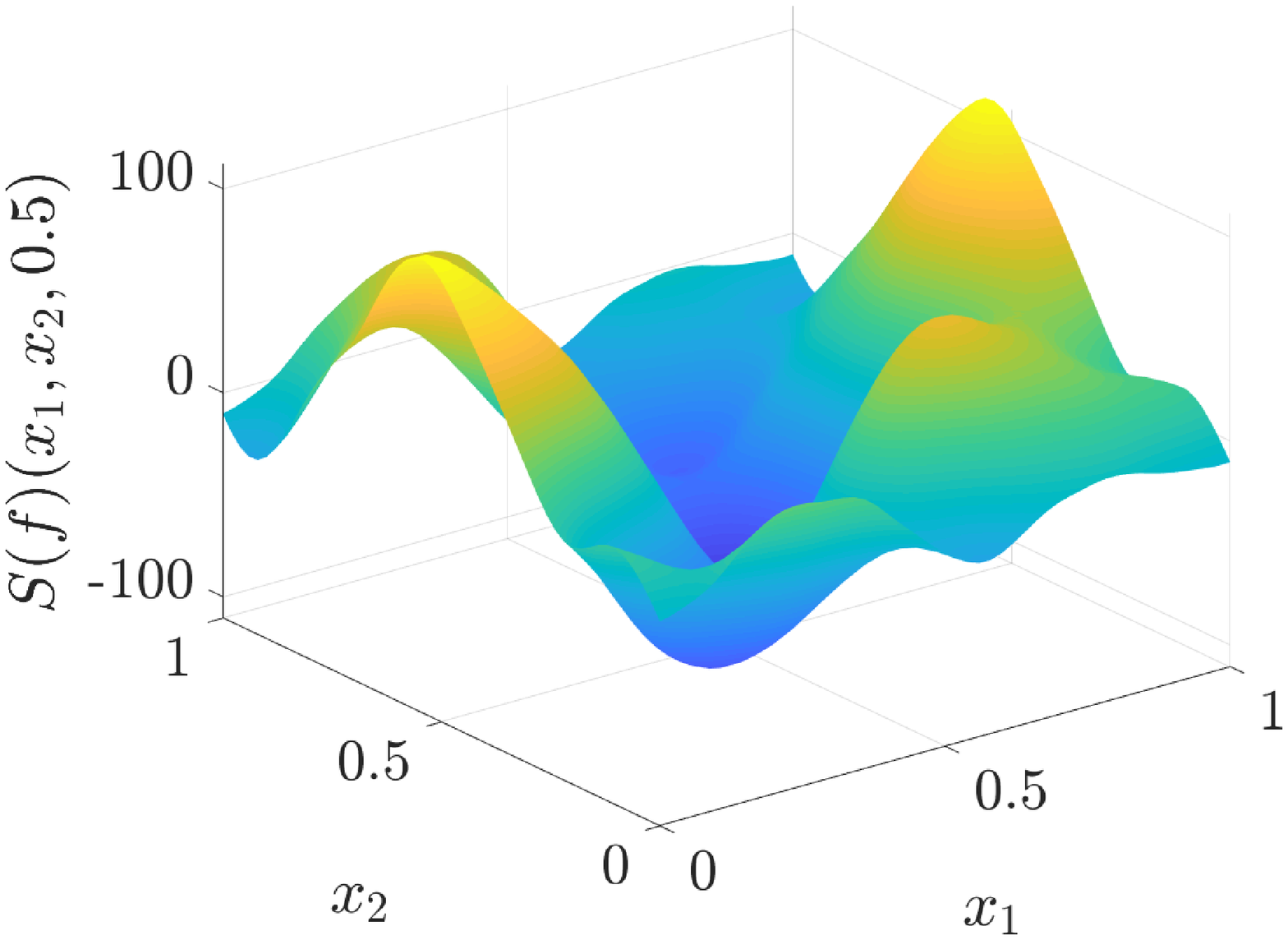}
		\\ (a) & (b) \\
		\includegraphics[width =5.5 cm]{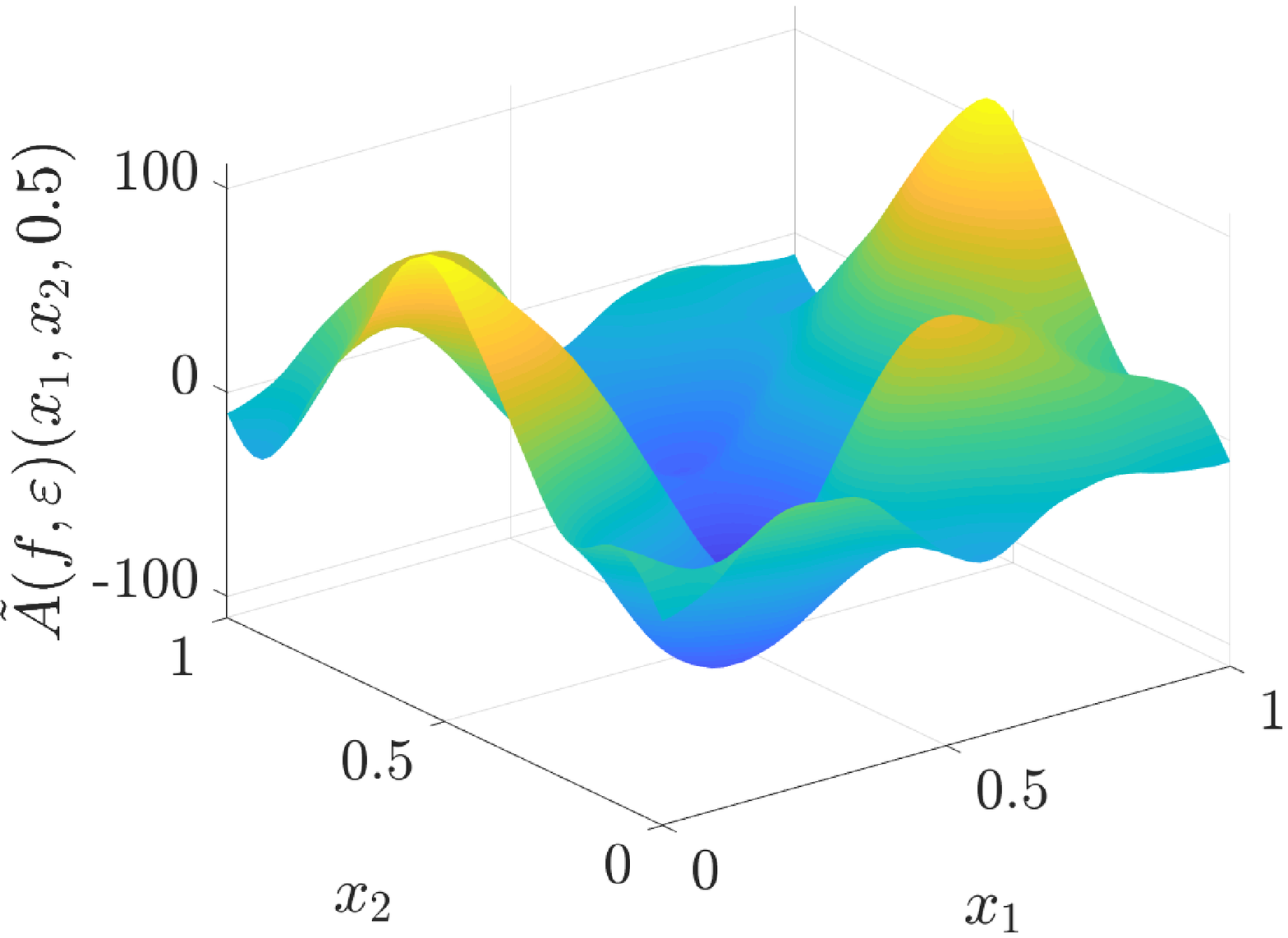} &
		\includegraphics[width = 5.5 cm]{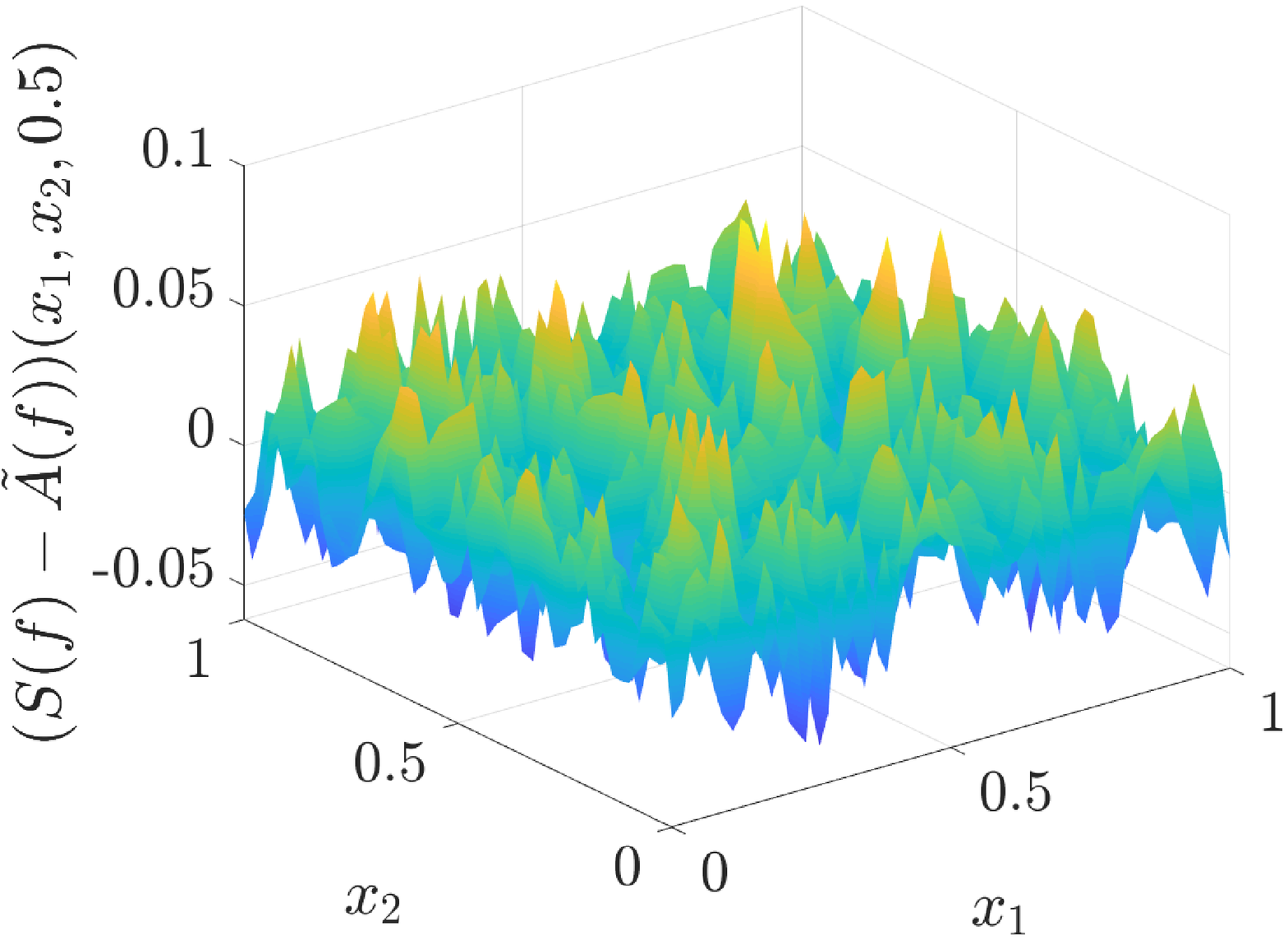}
		\\ (c) & (d)
	\end{tabular}
	\caption{For $\varepsilon = 0.1$:  (a) The input function, $f$; (b) The true first partial derivative of $f$; (c) The approximate first partial derivative of $f$; 
		(d) The approximation error.
		\label{solfig}} % 
\end{figure}

\begin{figure}[ht]
 	\centering
 		\includegraphics[width =7.5 cm]{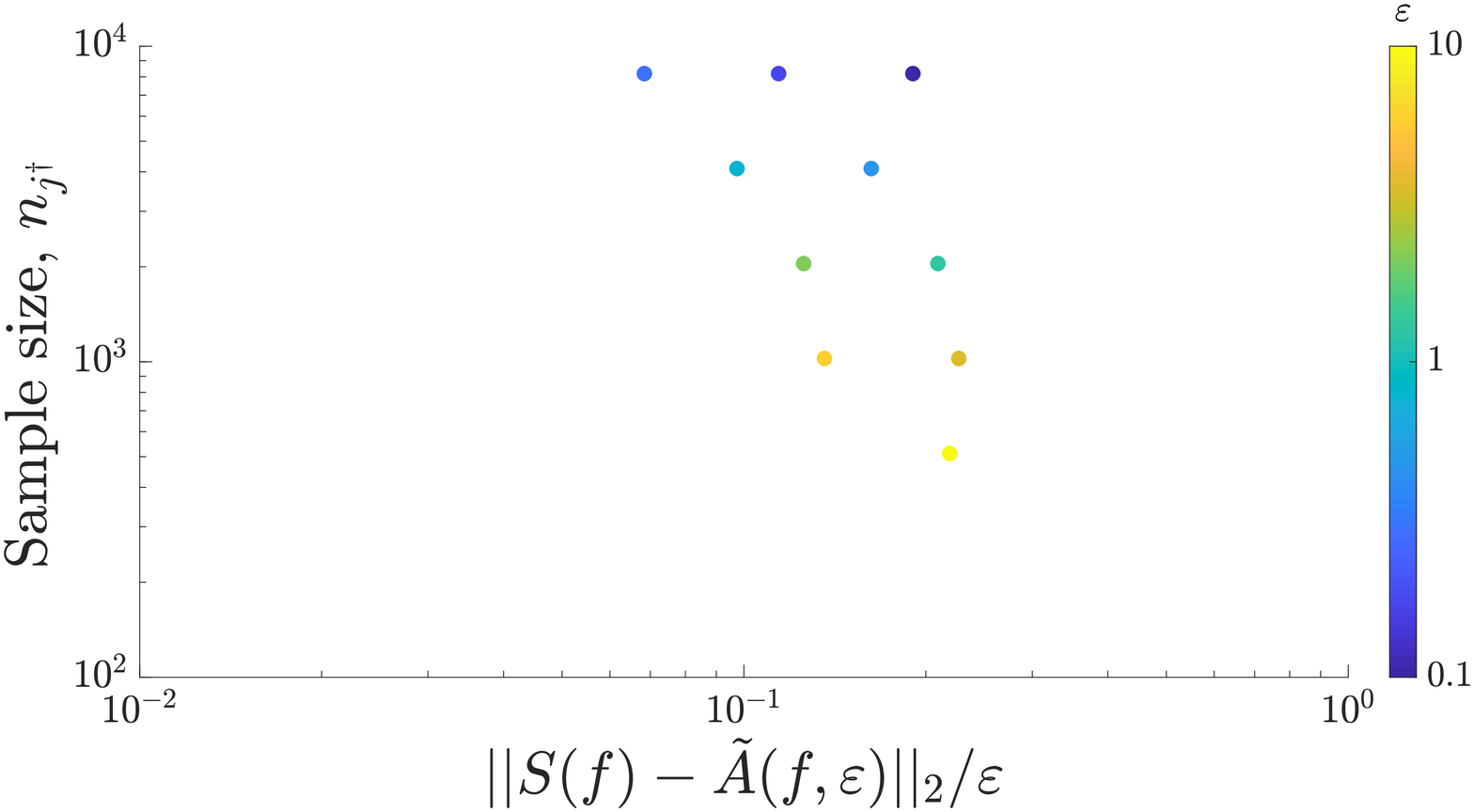}
 	\caption{Sample size $n_{j^{\dagger}}$, error tolerance $\varepsilon$, and ratio of true error to error tolerance. \label{errfig}} % 
 \end{figure}

\section{Discussion and Conclusion} \label{sec:conc}
Many practical adaptive algorithms lack theory, and many theoretically justified algorithms are non-adaptive.  We have demonstrated for a general setting how to construct a theoretically justified, essentially optimal algorithm.  The decay of the singular values determines the computational complexity of the problem and the computational cost of our algorithm.  

The key idea of our algorithm is to derive an adaptive error bound by assuming the steady decay of the Fourier series coefficients of the solution.  The set of such functions constitutes a cone.  We do not need to know the decay rate of these coefficients a priori.  The cost of our algorithm also serves as a goal for an algorithm that uses function values, which are more commonly available than Fourier series coefficients.  An important next step is to identify an essentially optimal algorithm based on function values.  Another research direction is to extend this setting to Banach spaces of inputs and/or outputs.

%\section*{References}
%\nocite{*}
\bibliographystyle{elsarticle-num}
\bibliography{FJH23,FJHown23}

\def\Ignore#1{}\def\notesupp#1{}\providecommand{\HickernellFJ}{Hickernell\xspace}\def\Ignore#1{}\def\notesupp#1{}\providecommand{\HickernellFJ}{Hickernell\xspace}
\begin{thebibliography}{1}
\expandafter\ifx\csname url\endcsname\relax
  \def\url#1{\texttt{#1}}\fi
\expandafter\ifx\csname urlprefix\endcsname\relax\def\urlprefix{URL }\fi
\expandafter\ifx\csname href\endcsname\relax
  \def\href#1#2{#2} \def\path#1{#1}\fi

\bibitem{KunEtal19a}
R.~J. Kunsch, E.~Novak, D.~Rudolf, Solvable integration problems and optimal
  sample size selection, submitted for publication (2018+).

\bibitem{HicEtal14a}
F.~J. \HickernellFJ, L.~Jiang, Y.~Liu, A.~B. Owen, Guaranteed conservative
  fixed width confidence intervals via {M}onte {C}arlo sampling, in: J.~Dick,
  F.~Y. Kuo, G.~W. Peters, I.~H. Sloan (Eds.), {M}onte {C}arlo and
  Quasi-{M}onte {C}arlo Methods 2012, Vol.~65 of Springer Proceedings in
  Mathematics and Statistics, Springer-Verlag, Berlin, 2013, pp. 105--128.

\bibitem{HIcEtal14b}
N.~Clancy, Y.~Ding, C.~Hamilton, F.~J. \HickernellFJ, Y.~Zhang, The cost of
  deterministic, adaptive, automatic algorithms: Cones, not balls, J.
  Complexity 30 (2014) 21--45.
\newblock \href {http://dx.doi.org/10.1016/j.jco.2013.09.002}
  {\path{doi:10.1016/j.jco.2013.09.002}}.

\bibitem{HicJim16a}
F.~J. \HickernellFJ, {\relax Ll}.~A. {Jim\'enez Rugama}, Reliable adaptive
  cubature using digital sequences, in: Cools and Nuyens  \cite{CooNuy16a}, pp.
  367--383, arXiv:1410.8615 [math.NA].

\bibitem{JimHic16a}
{\relax Ll}.~A. {Jim\'enez Rugama}, F.~J. \HickernellFJ, Adaptive
  multidimensional integration based on rank-1 lattices, in: Cools and Nuyens
  \cite{CooNuy16a}, pp. 407--422, arXiv:1411.1966.

\bibitem{CooNuy16a}
R.~Cools, D.~Nuyens (Eds.), {M}onte {C}arlo and Quasi-{M}onte {C}arlo Methods:
  {MCQMC}, {L}euven, {B}elgium, {A}pril 2014, Vol. 163 of Springer Proceedings
  in Mathematics and Statistics, Springer-Verlag, Berlin, 2016.

\end{thebibliography}

\end{document}